\newtheorem{thm}{Theorem}[section]
\newtheorem{proposition}[thm]{Proposition}
\newtheorem{claim}[thm]{Claim}
\newtheorem{corollary}[thm]{Corollary}
\newtheorem{conjecture}[thm]{Conjecture}
\newtheorem{counterexample}[thm]{Counterexample}
\theoremstyle{definition}
\newtheorem{definition}[thm]{Definition}
\newtheorem{remark}[thm]{Remark}
\newtheorem{notation}[thm]{Notation}
\newcommand{\pr}{\mathbb{P}}
\newcommand{\N}{\mathbb{N}}
\newcommand{\Z}{\mathbb{Z}}
\newcommand{\Q}{\mathbb{Q}}
\newcommand{\R}{\mathbb{R}}
\newcommand{\C}{\mathbb{C}}
\newcommand{\F}{\mathbb{F}}
\newcommand{\ND}{\operatorname{N}^1}
\newcommand{\NE}{\operatorname{NE}}
\newcommand{\Eff}{\operatorname{Eff}}
\newcommand{\Nef}{\operatorname{Nef}}
\newcommand{\Bl}{\operatorname{Bl}}
\newcommand{\Spec}{\operatorname{Spec}}
\newcommand{\Pic}{\operatorname{Pic}}
\newcommand{\sI}{\mathcal{I}}
\newcommand{\sO}{\mathcal{O}}
\newcommand{\sN}{\mathcal{N}}
\newcommand{\sE}{\mathcal{E}}
\newcommand{\cont}{\operatorname{cont}}
\newcommand{\vol}{\operatorname{vol}}
\title[Slope stability of Fano manifolds]
{Towards a criterion for slope stability of Fano manifolds along divisors}
\author{Kento Fujita}
\begin{document}
\maketitle
\begin{abstract}{\noindent 
We give a simple criterion for slope stability of Fano manifolds $X$ 
along divisors or smooth subvarieties. 
As an application, we show that $X$ is slope stable along 
an ample effective divisor $D\subset X$ 
unless $X$ is isomorphic to a projective space and $D$ is a hyperplane section. 
We also 
give counterexamples to Aubin's conjecture 
on the relation between the anticanonical volume 
and the existence of a K\"ahler-Einstein metric. 
Finally, we consider the case that $\dim X=3$; we give a complete answer for slope (semi)stability 
along divisors of Fano threefolds.} 
\end{abstract}

\section{Introduction}

Let $X$ be a Fano manifold, that is, a smooth projective variety such that 
the anticanonical divisor $-K_X$ of $X$ is ample. 
It has been conjectured that the $K$-polystability of $(X,-K_X)$ is 
equivalent to the existence of K\"ahler-Einstein metrics. However it is difficult to judge 
the $K$-(poly, semi)stability in general. 
In this article, we consider slope stability, which was introduced by Ross and Thomas (see \cite{RT}), 
that is weaker than $K$-stability but is easy to describe. For example, 
the case a Fano manifold is not slope (semi)stable along a smooth curve 
has been completely classified, see \cite{fjt} and \cite{ylee}.  

First, we give a simple criterion for slope stability of Fano manifolds along 
divisors (or smooth subvarieties). 

\begin{proposition}[see Proposition \ref{divslope} for detail]\label{divslope_intro}
For a Fano manifold $X$ and a divisor $D\subset X$, $X$ is slope stable 
$($resp.\ slope semistable$)$ along $D$ 
if and only if 
$\xi(D)>0$ $($resp. $\geq 0)$, where 
\[
\xi(D)=\vol_X(-K_X)
+\bigl(\epsilon\left(D\right)-1\bigr)\vol_X\bigl(K_X-\epsilon\left(D\right)D\bigr)
-\int_0^{\epsilon(D)}\vol_X(-K_X-xD)dx
\]
and $\epsilon(D)$ is the Seshadri constant of $D$ with respect to $-K_X$. 
\end{proposition}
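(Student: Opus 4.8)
\ The plan is to start from the Ross--Thomas definition, express the Donaldson--Futaki invariant of the relevant test configuration in terms of the volume function $x\mapsto\vol_X(-K_X-xD)$, and then collapse the resulting one-parameter family of inequalities to the single one at $x=\epsilon(D)$.

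First I would recall that, by \cite{RT}, $X$ is slope stable (resp.\ slope semistable) along $D$ if and only if the Donaldson--Futaki invariant of the deformation of $X$ to the normal cone of $D$ is positive (resp.\ non-negative) for every parameter $c$ with $0<c\le\epsilon(D)$; here the total space of this test configuration is $\Bl_{D\times\{0\}}(X\times\mathbb{A}^1)$ with polarization $p^*(-K_X)-cP$, where $P$ is the exceptional divisor. The point of the Seshadri constant is that $-K_X-xD$ is nef for $0\le x\le\epsilon(D)$ — indeed, checking on the components of the central fibre one sees that $\epsilon(D)$ is exactly the nef threshold of $-K_X$ along $D$ — so for such $x$ the volume equals the intersection number $(-K_X-xD)^n$ (with $n=\dim X$) and every Euler characteristic that appears below is a genuine polynomial in $x$.

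Next I would compute the invariant by asymptotic Riemann--Roch. Using the Rees-algebra description of the test configuration, for $k\gg0$ with $ck\in\Z$ the $\C^*$-weight on $H^0$ of the central fibre is
\[
w_k=ck\cdot h^0(X,-kK_X)-\sum_{m=1}^{ck}h^0(X,-kK_X-mD)+(\text{lower order}).
\]
Since every $-K_X-\tfrac mk D$ is nef we may replace $h^0$ by $\chi$, expand by asymptotic Riemann--Roch, and apply Euler--Maclaurin summation; this writes the two leading coefficients of $w_k$ in terms of $\int_0^c(-K_X-xD)^n\,dx$, $\int_0^c(-K_X)\cdot(-K_X-xD)^{n-1}\,dx$ and the values of $(-K_X-xD)^n$ at $x=0$ and $x=c$. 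Feeding these — together with the leading coefficients of $\chi(X,-kK_X)$, whose ratio is $n/2$ precisely because the polarization is $-K_X$ — into the Donaldson--Futaki formula, and simplifying by integration by parts (rewriting the terms $(-K_X)\cdot(-K_X-xD)^{n-1}$ via $-K_X=(-K_X-xD)+xD$ and using $\tfrac{d}{dx}\vol_X(-K_X-xD)=-n\,D\cdot(-K_X-xD)^{n-1}$), I expect the invariant to come out, up to a fixed positive constant, equal to
\[
\Phi(c):=\int_0^c(x-1)\,\frac{d}{dx}\vol_X(-K_X-xD)\,dx=\vol_X(-K_X)+(c-1)\vol_X(-K_X-cD)-\int_0^c\vol_X(-K_X-xD)\,dx .
\]
In particular $\Phi(\epsilon(D))=\xi(D)$ (for $D$ effective the boundary contribution reads $(\epsilon(D)-1)\vol_X(K_X-\epsilon(D)D)=0$, as in the statement).

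Finally I would perform the reduction to $c=\epsilon(D)$. Because $D$ is effective and $-K_X-cD$ is nef on $[0,\epsilon(D)]$, we have $D\cdot(-K_X-cD)^{n-1}\ge0$, so $\Phi'(c)=(c-1)\cdot\bigl(-n\,D\cdot(-K_X-cD)^{n-1}\bigr)$ is $\ge0$ for $c\le1$ and $\le0$ for $c\ge1$; combined with $\Phi(0)=0$ and $\Phi'(0^+)=n\,D\cdot(-K_X)^{n-1}>0$ this shows $\Phi>0$ on a neighbourhood of $0$ and $\min_{0<c\le\epsilon(D)}\Phi(c)=\Phi(\epsilon(D))$ (with $\Phi>0$ throughout when $\epsilon(D)\le1$). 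Hence the family of slope inequalities holds, resp.\ holds strictly, for all $c\in(0,\epsilon(D)]$ if and only if $\xi(D)=\Phi(\epsilon(D))\ge0$, resp.\ $>0$, which is the assertion. The main obstacle is the middle paragraph: converting the Donaldson--Futaki invariant of the deformation to the normal cone of a \emph{divisor} into the closed form $\Phi(c)$, keeping careful track of the Euler--Maclaurin boundary corrections and of the codimension-one contribution of $\sO_D$, and using the nefness of $-K_X-xD$ throughout to legitimise passing from $h^0$ to $\chi$; once the problem is in this ``volume'' shape the remaining steps are elementary calculus.
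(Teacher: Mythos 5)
Your overall route is the paper's: once the slope inequality $\mu_c(D)<\mu(X)$ is rewritten as the positivity of $\Phi(c)=\int_0^c(x-1)\tfrac{d}{dx}\vol_X(-K_X-xD)\,dx$, the sign of $\Phi'(c)=n(1-c)\bigl(D\cdot(-K_X-cD)^{n-1}\bigr)$ shows $\Phi$ increases on $(0,\min\{1,\epsilon(D)\})$ and decreases on $(1,\epsilon(D))$, so the one-parameter family of inequalities collapses to the single one at $c=\epsilon(D)$; this is exactly the argument preceding Proposition \ref{divslope}. The paper obtains $\Phi(c)=\xi_c(D)$ in one line from the Ross--Thomas coefficients $a_0(x)=\tfrac{1}{n!}\vol_X(-K_X-xD)$ and $a_1(x)=\tfrac{1}{2(n-1)!}\bigl(-K_X\cdot(-K_X-xD)^{n-1}\bigr)$ together with $\mu(X)=n/2$, rather than through the Donaldson--Futaki weight expansion and Euler--Maclaurin corrections you sketch; your middle paragraph, which you yourself flag as the main obstacle, is thus harder than necessary, but it does land on the correct function.

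Two concrete corrections are needed. First, your parenthetical claim that $(\epsilon(D)-1)\vol_X(-K_X-\epsilon(D)D)=0$ is false: $\epsilon(D)$ is only the nef threshold, and the boundary volume can be strictly positive (for $X=\Bl_p\pr^n$ and $D$ the strict transform of a hyperplane through $p$ one has $\epsilon(D)=n-1$ and $\vol_X(-K_X-(n-1)D)=2^n$; see Remark \ref{nef_destabilize}). The expression $\vol_X(K_X-\epsilon(D)D)$ in the statement is a sign typo for $\vol_X(-K_X-\epsilon(D)D)$, i.e.\ for your $\Phi(\epsilon(D))$; do not argue the term away. Second, the Ross--Thomas definition tests the endpoint $c=\epsilon(D)$ only when $\epsilon(D)\in\Q$ and the global sections of $\sO_X\bigl(k(-K_X-\epsilon(D)D)\bigr)$ saturate, and this matters precisely in the borderline case $\Phi(\epsilon(D))=0$ with $\Phi>0$ on $(0,\epsilon(D))$ (e.g.\ $\pr^n$ with a hyperplane), where instability is detected only at the endpoint. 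You need the sentence the paper supplies: since $X$ is Fano, $\Nef(X)$ is a rational polyhedral cone spanned by semiample divisors, whence $\epsilon(D)\in\Q$ and $-K_X-\epsilon(D)D$ is semiample, which yields the saturation condition.
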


As an application, we can investigate the case where $D\subset X$ is an ample divisor. 

\begin{thm}[{=Theorem \ref{ample} \eqref{ample1}}]\label{ample_intro}
For a Fano manifold $X$ and an ample divisor $D\subset X$, $X$ is slope stable along $D$ 
unless $X$ is isomorphic to a projective space and $D$ is a hyperplane section.
\end{thm}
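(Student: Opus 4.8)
The plan is to apply the criterion of Proposition~\ref{divslope_intro}: since $D$ is ample it suffices to show $\xi(D)>0$ unless $(X,D)\cong(\mathbb{P}^n,H)$ with $H$ a hyperplane, $n:=\dim X$. Two reductions come first. Because $-K_X$ and $D$ are both ample and the ample cone is open, $0<\epsilon(D)<\infty$, and $K_X-\epsilon(D)D=-\bigl((-K_X)+\epsilon(D)D\bigr)$ is the negative of an ample divisor, so the middle term $\vol_X\bigl(K_X-\epsilon(D)D\bigr)$ of $\xi(D)$ vanishes; thus
\[
\xi(D)=\vol_X(-K_X)-\int_0^{\epsilon(D)}\vol_X(-K_X-xD)\,dx .
\]
Also, by the cone theorem the class $N:=K_X+(n+1)D$ is nef when $D$ is ample, since a rational curve $C$ spanning a $K_X$-negative extremal ray with $(N\cdot C)<0$ would satisfy $n+1\ge -K_X\cdot C>(n+1)(D\cdot C)\ge n+1$.

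Next I would bound the integral using the Brunn--Minkowski inequality $\vol_X(L_1+L_2)^{1/n}\ge\vol_X(L_1)^{1/n}+\vol_X(L_2)^{1/n}$ for pseudoeffective $\R$-divisors, applied to $-K_X=(-K_X-xD)+xD$: for all $x\ge 0$,
\[
\vol_X(-K_X-xD)\ \le\ \Bigl(\vol_X(-K_X)^{1/n}-x\,\vol_X(D)^{1/n}\Bigr)_+^{\,n},
\]
and the right-hand side integrates over $[0,\infty)$ to $\vol_X(-K_X)^{(n+1)/n}/\bigl((n+1)\,\vol_X(D)^{1/n}\bigr)$. Hence
\[
\xi(D)\ \ge\ \vol_X(-K_X)\left(1-\frac{\vol_X(-K_X)^{1/n}}{(n+1)\,\vol_X(D)^{1/n}}\right).
\]
On the other hand, nefness of $N$ gives $\vol_X\bigl((n+1)D\bigr)\ge\vol_X(-K_X)$ by monotonicity of volume, i.e. $\vol_X(-K_X)^{1/n}\le(n+1)\,\vol_X(D)^{1/n}$; therefore $\xi(D)\ge 0$, so $X$ is always slope semistable along an ample divisor.

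For equality, suppose $\xi(D)=0$. Then the last two displays force $\vol_X(-K_X)^{1/n}=(n+1)\,\vol_X(D)^{1/n}$, i.e. $(-K_X)^n=(n+1)^nD^n=\bigl((n+1)D\bigr)^n$. Since $(n+1)D\equiv -K_X+N$, expanding
\[
\bigl((n+1)D\bigr)^n=(-K_X+N)^n=\sum_{k=0}^n\binom nk (-K_X)^{n-k}\cdot N^k\ \ge\ (-K_X)^n+n\,(-K_X)^{n-1}\cdot N
\]
(all summands are nonnegative, being products of nef classes) forces $(-K_X)^{n-1}\cdot N=0$; as $-K_X$ is ample this gives $N\equiv 0$, i.e. $-K_X\equiv(n+1)D$. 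The equality case of the Kobayashi--Ochiai theorem then yields $X\cong\mathbb{P}^n$ with $D$ a hyperplane section, and there $\xi(H)=0$ by a direct computation. Consequently $\xi(D)>0$ for every other ample divisor, which is the assertion.

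I do not anticipate a deep obstacle: the proof combines Brunn--Minkowski for volumes with the standard Mori bound that $K_X+(n+1)A$ is nef for $A$ ample. The points requiring care are the validity of the volume estimate on the whole range $[0,\epsilon(D)]$ --- including where $-K_X-xD$ ceases to be big, where both sides still behave correctly --- and the last implication $(-K_X)^{n-1}\cdot N=0\Rightarrow N\equiv 0$, which is exactly where ampleness of $-K_X$ is used; alternatively one first deduces $\vol_X(N)=0$ and concludes via the semiample fibration of $N$ together with Kobayashi--Ochiai on a general fibre.
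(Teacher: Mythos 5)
Your argument is essentially correct and reaches the conclusion by a genuinely different route from the paper, but one step at the start is misstated and needs repair. You claim the middle term of $\xi(D)$ vanishes because $K_X-\epsilon(D)D$ is the negative of an ample divisor. That reading comes from a sign typo in the introduction: the correct expression (see Proposition \ref{divslope} with $r=1$) is $(\epsilon(D)-1)\vol_X(-K_X-\epsilon(D)D)$, and $-K_X-\epsilon(D)D$ is a \emph{nef} class on the boundary of $\Nef(X)$ whose volume is in general nonzero (it can even be big). So the quantity you actually estimate is $\tilde\xi(D):=\vol_X(-K_X)-\int_0^{\epsilon(D)}\vol_X(-K_X-xD)\,dx$, not $\xi(D)$. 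The proof survives because the omitted term is nonnegative as soon as $\epsilon(D)\ge 1$, so $\xi(D)\ge\tilde\xi(D)$, and one may assume $\epsilon(D)>1$ since otherwise $X$ is automatically slope stable along $D$ (Remark \ref{sesh}). With that adjustment your chain
\[
0\ \ge\ \xi(D)\ \ge\ \tilde\xi(D)\ \ge\ \vol_X(-K_X)\Bigl(1-\frac{\vol_X(-K_X)^{1/n}}{(n+1)\vol_X(D)^{1/n}}\Bigr)\ \ge\ 0
\]
still forces equality throughout, and the rest of your equality analysis --- $(-K_X)^{n-1}\cdot N=0$, hence $N\equiv 0$ by the Hodge-index-type lemma for nef classes, hence $-K_X\equiv(n+1)D$, which on a Fano manifold upgrades to linear equivalence so that Kobayashi--Ochiai applies --- goes through.

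Concerning the comparison with the paper: there the proportional case $-K_X\equiv tD$ is a one-line computation plus Kobayashi--Ochiai, and in the non-proportional case the author cuts $\Nef(X)$ with the plane spanned by $[-K_X]$ and $[D]$, writes both classes against the two boundary nef classes $H_1,H_2$, bounds $p_i/q_i\le n$ via the Cho--Miyaoka--Shepherd-Barron bound on lengths of extremal rays, and verifies positivity of each coefficient $M_i$ of $(H_1^i\cdot H_2^{n-i})$ by hand. Your proof replaces all of this with the Khovanskii--Teissier (Brunn--Minkowski) inequality for nef classes together with the elementary cone-theorem fact that $K_X+(n+1)D$ is nef for $D$ ample and Cartier; it needs no case division, avoids CMSB entirely, and isolates the equality case cleanly. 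What it costs is the appeal to Khovanskii--Teissier on the whole interval $[0,\epsilon(D)]$ (harmless here, since every $-K_X-xD$ in that range is nef, so the paper's self-intersection definition of $\vol$ agrees with the section-counting one) and to the numerical-triviality lemma for nef classes orthogonal to $A^{n-1}$ --- both standard, but external to the bare intersection-theoretic toolkit the paper confines itself to.
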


We also construct the Fano manifolds which are not slope semistable along 
some divisors but have ``small" anticanonical volumes, 
which are counterexamples to the following Conjecture \ref{Aubin} 
(cf.\ Remark \ref{Aubinrmk}) 
on the relation between the anticanonical volume of $X$ and 
the existence of a K\"ahler-Einstein metric on $X$. 

\begin{conjecture}[{see also Remark \ref{Aubinrmk}}]\label{Aubin} 
Let $X$ be a Fano $n$-fold. 
If the anticanonical volume $\vol_X(-K_X)$ is less than 
$\bigl({(n+1)}^2/{2n}\bigr)^n$, then $X$ admits K\"ahler-Einstein metrics.
\end{conjecture}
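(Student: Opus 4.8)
The plan is to deduce the Kähler–Einstein existence from an algebro-geometric stability statement via the Yau–Tian–Donaldson correspondence. By the solution of the YTD conjecture for Fano manifolds (Chen–Donaldson–Sun), $X$ carries a Kähler–Einstein metric once $(X,-K_X)$ is $K$-polystable, so it suffices to verify $K$-stability. By the valuative criterion of Fujita and Li this amounts to showing, for every prime divisor $E$ over $X$, that
\[
\beta(E)=A_X(E)\cdot\vol_X(-K_X)-\int_0^{\infty}\vol_X(-K_X-tE)\,dt>0,
\]
where $A_X(E)$ is the log discrepancy. This $\beta(E)$ is the ``global'' companion of the quantity $\xi(D)$ in Proposition \ref{divslope_intro}: slope stability only tests $E=D$ a divisor \emph{on} $X$ and only integrates up to the Seshadri constant, so my first observation is that the conjecture is strictly stronger than slope semistability and I must control \emph{all} valuations over $X$. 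A guiding computation is that for $X=\pr^n$ and $E=H$ a hyperplane one has $A_X(H)=1$, $\vol_X(-K_X-tH)=(n+1-t)^n$ for $0\le t\le n+1$, and hence $\beta(H)=0$; thus $(\pr^n,H)$ is the borderline case, exactly matching the exceptional locus in Theorem \ref{ample_intro}.

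First I would fix a worst valuation $E$, set $\tau(E)=\sup\{t:-K_X-tE\text{ is big}\}$, and try to bound $\beta(E)$ below using only $V:=\vol_X(-K_X)$. The function $t\mapsto\vol_X(-K_X-tE)$ is nonincreasing, vanishes at $\tau(E)$, and has concave $n$-th root; combining these with the Seshadri/multiplicity inequalities relating $A_X(E)$, $\tau(E)$ and $V$, I would aim to reduce the positivity of $\beta(E)$ to a single one-variable estimate, from which the constant $\bigl((n+1)^2/2n\bigr)^n$ should emerge by optimizing against the extremal $(\pr^n,H)$ configuration above (which saturates every inequality). The numerical threshold presumably originates from Aubin's analytic estimate in the continuity method, reflected on the algebraic side by Tian's $\alpha$-invariant criterion: it would equally suffice to prove $\alpha(X)>n/(n+1)$, and in the relevant range this would follow from a lower bound $\alpha(X)\ge\frac{n+1}{2}\vol_X(-K_X)^{-1/n}$, since then $V<\bigl((n+1)^2/2n\bigr)^n$ forces $\alpha(X)>n/(n+1)$.

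The hard part will be establishing any such clean, volume-only bound uniformly over all divisorial valuations, and I expect this to be where the argument is most fragile. The sharp volume estimates available yield only the bracket
\[
V\bigl(A_X(E)-\tau(E)\bigr)\ \le\ \beta(E)\ \le\ V\Bigl(A_X(E)-\frac{\tau(E)}{n+1}\Bigr),
\]
where the lower bound comes from the crude $\int_0^{\tau}\vol_X\,dt\le\tau(E)V$ and the upper bound from the concavity estimate $\vol_X(-K_X-tE)\ge V(1-t/\tau(E))^n$. The lower end is exactly what I would need, but it is already negative in the extremal regime (e.g.\ $A_X(H)=1$, $\tau(H)=n+1$ for $(\pr^n,H)$), while the concavity of $\vol^{1/n}$ pushes the integral \emph{up}, i.e.\ in the wrong direction for certifying positivity. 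Thus volume alone does not pin down the actual shape of $t\mapsto\vol_X(-K_X-tE)$, and the decisive positivity must instead be extracted from $A_X(E)$, which is not controlled by $V$—especially for valuations whose center on $X$ has high codimension or is very singular, where $A_X(E)$ can be large while the integral stays small. Producing such a destabilizing $E$ over a small-volume Fano that nevertheless violates the bound is the natural way either to complete the proof or, failing that, to locate a counterexample.
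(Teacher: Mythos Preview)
The statement you are attempting to prove is a \emph{conjecture}, and the paper does not prove it---it \emph{disproves} it. Immediately after stating Conjecture~\ref{Aubin}, the paper announces Counterexample~\ref{Aubincounter_intro} (proved as Corollary~\ref{Aubincounter}): for every $n\geq 4$ there is a Fano $n$-fold $X=\pr_Z(\sO_Z\oplus\sO_Z(1))$, with $Z$ a double cover of $\pr^{n-1}$ branched along a smooth hypersurface of degree $2(n-2)$, such that $\vol_X(-K_X)=2(3^n-1)$ and $X$ is not slope semistable along the negative section $E$. For $n\geq 5$ one has $2(3^n-1)<\bigl((n+1)^2/2n\bigr)^n$, so the volume hypothesis is satisfied, yet $X$ carries no K\"ahler--Einstein metric by Theorem~\ref{KE_stable}. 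Hence no proof of the conjecture can exist.

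Your own write-up essentially diagnoses this: you correctly observe that the only volume-only lower bound for $\beta(E)$ is $V(A_X(E)-\tau(E))$, which is already negative in the extremal regime, and that concavity of $\vol^{1/n}$ pushes the integral the wrong way. That is not a technical obstacle to be overcome but the actual reason the conjecture fails. In the paper's counterexample the destabilizing divisor is the section $E$ with $\sN_{E/X}$ anti-ample and $\epsilon(E)=2$, so $\vol_X(-K_X-xE)$ is strictly convex upward on $(0,2)$ and Proposition~\ref{convex}\,\eqref{convex2} gives $\xi(E)<0$ directly---this is exactly the ``destabilizing $E$ over a small-volume Fano'' you speculate about in your final sentence. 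The correct response to the assignment is therefore not to attempt a proof but to recognize that the paper supplies counterexamples.
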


\begin{counterexample}[=Corollary \ref{Aubincounter}]\label{Aubincounter_intro} 
For any $n\geq 4$, there exists a Fano $n$-fold $X$ such that 
$\vol_X(-K_X)=2(3^n-1)$ $($hence $2(3^n-1)<\bigl({(n+1)}^2/{2n}\bigr)^n$ 
holds if $n\geq 5)$ 
but $X$ does not admit K\"ahler-Einstein metrics.
\end{counterexample}

Finally, using the classification result \cite{MoMu}, we give the complete answer 
for slope (semi)stability of Fano threefolds along divisors:

\begin{thm}[=Theorem \ref{three}]\label{three_intro}
Let $X$ be a smooth Fano threefold.
\begin{enumerate}
\renewcommand{\theenumi}{\arabic{enumi}}
\renewcommand{\labelenumi}{$(\theenumi)$}
\item
$X$ is slope semistable along any effective divisor but 
there exists a divisor $D\subset X$ such that $X$ is not slope stable along $D$ 
if and only if $X$ is isomorphic to one of:
\[
{\pr}^3, {\pr}^1\times{\pr}^2, {\pr}_{{\pr}^1\times{\pr}^1}(\sO (0,1)\oplus\sO (1,0)), 
{\pr}^1\times{\pr}^1\times{\pr}^1, {\pr}^1\times S_m\,\, (1\leq m\leq 7).
\]
\item
There exists a divisor $D\subset X$ such that $X$ is not slope semistable along $D$ 
if and only if $X$ is isomorphic to one of:
\begin{eqnarray*}
{\Bl}_{\rm{line}}{\Q}^3, {\Bl}_{\rm{line}}{\pr}^3, {\pr}_{{\pr}^2}(\sO\oplus\sO (1)), 
{\pr}_{{\pr}^2}(\sO\oplus\sO (2)), \\
{\pr}^1\times{\F}_1, {\pr}_{{\F}_1}(\sO\oplus\sO (e+f)), 
{\pr}_{{\pr}^1\times{\pr}^1}(\sO\oplus\sO (1,1)).
\end{eqnarray*}
\end{enumerate}
\end{thm}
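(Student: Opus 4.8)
The engine is Proposition~\ref{divslope}: for each effective divisor $D$ we must decide the sign of $\xi(D)$, and to evaluate $\xi(D)$ we need two pieces of data about $D$ relative to $-K_X$, namely the Seshadri constant $\epsilon(D)$ and the volume function $x\mapsto\vol_X(-K_X-xD)$ restricted to $x\in[0,\epsilon(D)]$. The plan begins with two reductions. First, it suffices to test \emph{prime} divisors: writing an effective $D=\sum a_iD_i$ and comparing the Seshadri constant and restricted volume function of $D$ with those of its components (equivalently, using the behaviour of $\xi$ under adding an effective divisor), one checks that $\xi\ge0$ (resp.\ $>0$) on all prime divisors forces $\xi\ge0$ (resp.\ $>0$) on all effective divisors. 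Second, by Theorem~\ref{ample}, $X$ is slope stable along every ample divisor unless $(X,D)\cong(\pr^3,\text{hyperplane})$, in which case one computes directly $\xi(\text{hyperplane})=0$; so after recording $\pr^3$ in list~(1) we only examine prime divisors $D$ that are \emph{not} ample. In particular, if $\rho(X)=1$ every effective divisor is ample and nothing further happens, so from now on $\rho(X)\ge2$ and we run through the Mori--Mukai classification \cite{MoMu} family by family.

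For $\rho(X)\ge2$ the non-ample prime divisors are controlled by the Mori theory of $X$: since a Fano threefold is a Mori dream space, $\overline{\Eff}(X)$ is rational polyhedral, and a non-ample prime divisor is either (a) the exceptional divisor of a divisorial extremal contraction (one of the Mori--Mukai types $E_1,\dots,E_5$) or of a divisorial contraction sitting on an interior wall of $\Nef(X)$, or (b) nef but not ample, arising from a conic bundle $X\to S$ or a del Pezzo fibration $X\to\pr^1$ --- in the fibre-type cases the relevant prime divisors being the preimages $\phi^{-1}(C)$ of irreducible curves $C$ spanning extremal rays of the base, together with a general fibre of a del Pezzo fibration. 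For each family one writes down this finite list of candidate divisors $D$.

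For each candidate $D$ the computation proceeds by running the ray $-K_X-xD$, $x\ge0$: its Zariski decomposition (equivalently, the Mori-chamber structure it crosses) is piecewise linear in $x$ with finitely many breakpoints, $\epsilon(D)$ is the first $x$ at which $-K_X-xD$ leaves $\Nef(X)$, and on each chamber $\vol_X(-K_X-xD)$ is a cubic in $x$ computed by intersection theory on $X$ and on the few birational models appearing in the Zariski decomposition; substituting the resulting piecewise-cubic function into the formula of Proposition~\ref{divslope} gives $\xi(D)$. For all but a handful of families, a crude estimate --- bounding $\int_0^{\epsilon(D)}\vol_X(-K_X-xD)\,dx$ from above in terms of $(-K_X)^3$ and the numerical type of $D$ --- already gives $\xi(D)>0$ for every candidate, so $X$ is slope stable along every divisor. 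The remaining families are precisely those in the statement; for those in list~(1) one finds $\xi(D)=0$ for some $D$ and $\xi>0$ for all other candidates, and for those in list~(2) one exhibits a $D$ with $\xi(D)<0$. The product cases $X=\pr^1\times S$ are especially transparent: there $\xi$ of a divisor of the form $\{\mathrm{pt}\}\times S$ or $\pr^1\times C$ is expressed through $\deg S$ and volumes on $S$, which cleanly separates $\pr^1\times\F_1$ (unstable, list~(2)) from $\pr^1\times S_m$ for $1\le m\le7$ and $\pr^1\times\pr^1\times\pr^1$ (semistable but not stable, list~(1)).

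The principal obstacle is carrying out this Zariski-decomposition/volume computation uniformly over every family of Picard rank $\ge2$, in particular for the non-toric members --- the blow-ups of $\Q^3$, the conic bundles and del Pezzo fibrations, and the projective bundles over $\F_1$ and $\pr^1\times\pr^1$ --- where the breakpoints of $x\mapsto\vol_X(-K_X-xD)$ must be extracted from explicit birational geometry rather than read off a fan. A second, subtler point is completeness: one must ensure that for each family every prime divisor has been accounted for, in particular that no prime divisor lying in the interior of $\overline{\Eff}(X)$ but on a wall of $\Nef(X)$ has been overlooked, and that the reduction from arbitrary effective divisors to the finite list of extremal prime divisors above is watertight.
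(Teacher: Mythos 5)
Your proposal shares the paper's engine (Proposition~\ref{divslope}, Theorem~\ref{ample}, the product results) but the route you take through the classification has a genuine gap, and it is precisely the one you flag at the end without closing. The paper does \emph{not} reduce to prime divisors and does \emph{not} sweep through the Mori--Mukai list family by family. Its key tool is Proposition~\ref{ray}: if $X$ is not slope stable along an effective divisor $D$ (prime or not), then $\epsilon(D)>1$ forces $(D\cdot C)<(-K_X\cdot C)$ for \emph{every} irreducible curve $C$, and since $D$ is effective there is an extremal ray $R$ with $(D\cdot C_R)\geq 1$, hence $l(R)\geq 2$. This single necessary condition does two jobs at once: it restricts $X$ to the few families admitting a length-$\geq 2$ ray (so most of the $\rho_X\geq 2$ classification never needs to be touched), and, combined with the explicit description of $\Eff(X)$ and $\NE(X)$ on those families, it pins the numerical class of any candidate $D$ down to one or two possibilities (e.g.\ $D\sim aH_1+H_2$ with $a\leq 0$ when $\rho_X=2$, or $D\sim E_1$, $E_2$, $E_1+H_1$ in case (V) of $\rho_X=3$). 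Since $\xi(D)$ and $\epsilon(D)$ depend only on the numerical class, this makes the check finite with no need to decide which classes contain irreducible members.

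By contrast, your first reduction --- that $\xi\geq 0$ (resp.\ $>0$) on all prime divisors implies the same on all effective divisors --- is asserted but not proved, and there is no obvious reason it holds: $\epsilon(D_1+D_2)$ bears no simple relation to $\epsilon(D_1)$ and $\epsilon(D_2)$, and $\xi$ is neither additive nor monotone in $D$ in any evident sense (Proposition~\ref{prod2} shows sums can behave \emph{strictly better} than their summands, which already indicates $\xi$ is not linear in the divisor). Your second enumeration --- non-ample prime divisors are either contraction-exceptional or pulled back from a fibration base --- is also incomplete, as you concede; classes such as $E_1+H_1$ in case (V) fall outside it. Two smaller points: the Zariski-decomposition machinery is unnecessary, since the integral in Proposition~\ref{divslope} runs only over $[0,\epsilon(D)]$ where $-K_X-xD$ is nef and the volume is the single cubic $\bigl((-K_X-xD)^3\bigr)$; and a family-by-family traversal of the whole $\rho_X\geq 2$ classification is far heavier than what Proposition~\ref{ray} permits. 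To repair your argument you would need either a proof of the reduction to a finite, complete list of candidate classes, or (as the paper does) a numerical constraint on arbitrary destabilizing classes of the type furnished by Remark~\ref{sesh} and Proposition~\ref{ray}.
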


\bigskip

\noindent\textbf{Acknowledgements.}
The author is grateful to Doctor Yuji Odaka who pointed out Corollary \ref{ODK} 
after the author has been calculated the slope of special Fano manifolds (Proposition \ref{nonsemiprop}), 
and helped him to improve Proposition \ref{divslope}. 

The author is partially supported by JSPS Fellowships for Young Scientists.

\bigskip

\noindent\textbf{Notation and terminology.}
We always consider over the complex number field $\C$. 
A \emph{variety} means an irreducible and reduced scheme of finite type over $\Spec\C$. 
The theory of extremal contraction, we refer the readers to \cite{KoMo}.
For a projective variety $X$, let $\Eff(X)$ (resp.\ $\Nef(X)$) be the effective 
(resp.\ nef) cone 
which is defined as the cone in $\ND(X)$ spanned by the classes 
of effective (resp.\ nef) divisors on $X$. 
For a complete variety $X$, the Picard number of $X$ is denoted by $\rho_X$. 
For a smooth projective variety $X$ and a $K_X$-negative extremal ray $R\subset\overline{\NE}(X)$,
we define the \emph{length} $l(R)$ of $R$ by
\[
l(R):=\min\{(-K_X\cdot C)\mid C\text{ is a rational curve with } [C]\in R\},
\]
and we define \emph{a minimal rational curve of $R$} such that a rational curve $C\subset X$ with 
$[C]\in R$ and $(-K_X\cdot C)=l(R)$.

For an algebraic variety $X$ and a closed subscheme $Y\subset X$, denotes corresponding ideal sheaf $\sI_Y\subset\sO_X$, and 
$\Bl_Y\colon{\Bl}_Y(X)\rightarrow X$ or $\Bl_{\sI_Y}\colon{\Bl}_{\sI_Y}(X)\rightarrow X$ denotes the blowing up of $X$ along $Y$.

For algebraic varieties $X_1,\cdots,X_k$, we write the projection 
$p_{1,\cdots,t}\colon X_1\times\cdots\times X_k\rightarrow X_1\times\cdots\times X_t$.

We say $X$ is a \emph{Fano manifold} if $X$ is a smooth projective variety 
whose anticanonical divisor $-K_X$ is ample.
We note that if $X$ is a Fano manifold, then there is the canonical embedding $\Pic(X)\hookrightarrow \ND(X)$. For a Fano manifold $X$, let the \emph{Fano index} 
of $X$ be 
\[
\max\{r\in\Z_{>0}|-K_X\sim rL\text{ for some Cartier divisor }L\}.
\]
For a complete $n$-dimensional variety $X$ and a nef Cartier divisor 
(or a nef invertible sheaf) $D$ on $X$, 
the \emph{volume} of $D$ (denotes $\vol_X(D)$) means the self intersection number $(D^n)$ of $D$.

The symbol ${\Q}^n$ denotes a smooth hyperquadric in ${\pr}^{n+1}$.
The symbol ${\F}_1$ denotes the Hirzebruch surface having the $(-1)$-curve $e\subset\F_1$, 
and let $f\subset\F_1$ be a fiber of ${\pr}^1$-bundle $\F_1\rightarrow\pr^1$.
The symbol $S_m\, (1\leq m\leq 7)$ denotes a (smooth) del Pezzo surface $S$ 
(Fano $2$-fold) such that the anticanonical volume $\vol_S(-K_S)$ is equal to $m$.

\section{Slope stabilities of polarized varieties}

We recall slope stability of polarized varieties, which has been introduced by Ross and Thomas.
See \cite{RT} in detail.

\begin{definition}\label{slope_review}
Let $(X,L)$ be a polarized variety of $\dim X=n$, let $Z\subset X$ be a 
closed subscheme, let $\sigma\colon\hat{X}\rightarrow X$ 
be the blowing up of $X$ along $Z$ and let $E\subset\hat{X}$ be the Cartier divisor 
defined by $\sO_{\hat{X}}(-E)=\sigma^{-1}\sI_Z\cdot\sO_{\hat{X}}$.
\begin{itemize}
\item
Let $\epsilon(\sI_Z;(X,L))$ be the \emph{Seshadri constant of $Z$ with respect to $L$} 
(we often write 
$\epsilon(Z,X)$ or $\epsilon(Z)$ instead of $\epsilon(\sI_Z;(X,L))$ for simplicity), 
which is defined as follows:
\[
\epsilon(\sI_Z;(X,L)):=\max\{c\in{\R}_{>0}|{\sigma}^*L-cE \text{ is nef on }\hat{X}\}.
\]
\item
For $k$, $xk\in\N$ with $k\gg0$, we can write
\[
\chi(\hat{X},{\sigma}^*(kL)-xkE)=a_0(x)k^n+a_1(x)k^{n-1}+\cdots+a_n(x),
\]
where $a_i(x)\in\Q[x]$. 
Let $\mu_c(\sI_Z,L)$ be the \emph{slope of $Z$ 
with respect to $L$ and $c\in(0,\epsilon(Z)]$} (we often write 
$\mu_c(Z)$ instead of $\mu_c(\sI_Z,L)$ for simplicity), which is defined as follows:
\[
\mu_c(\sI_Z,L):=\frac{\int_0^c(a_1(x)+\frac{a'_0(x)}{2})dx}{\int_0^ca_0(x)dx}.
\]
We also define \emph{the slope of $X$ with respect to $L$} as
\[
\mu(X)=\mu(X,L):=\frac{a_1}{a_0},
\]
where $a_i\in\Q$ are defined by $\chi(X,rL)=a_0r^n+a_1r^{n-1}+\cdots+a_n$.
\end{itemize}
\end{definition}

\begin{definition}[slope (semi)stability]\label{stable_semistable}
Let $(X,L)$ (we often omit the polarization $L$) and $Z\subset X$ be as above.

$(X,L)$ is \emph{slope stabe} (resp.\ \emph{slope semistable}) \emph{along} $Z$ if:
\begin{itemize}
\item
\emph{slope semistability:}\\
$\mu_c(Z)\leq\mu(X)$ for all $c\in(0,\epsilon(Z)]$,
\item
\emph{slope stability:}\\
$\mu_c(Z)<\mu(X)$ for all $c\in(0,\epsilon(Z))$, and also for $c=\epsilon(Z)$ 
if $\epsilon(Z)\in\Q$ and global sections of $L^k\otimes{\sI}_Z^{k\epsilon(Z)}$ saturates 
for $k\gg0$.
\end{itemize}

For a polarized variety $(X,L)$ and a coherent ideal sheaf $\sI\subset\sO_X$ with $\sO_{\Bl_{\sI}(X)}(-E):={\Bl_{\sI}}^{-1}\sI\cdot\sO_{\Bl_{\sI}(X)}$, 
\emph{global sections of $L\otimes\sI$ saturates} if ${\Bl_{\sI}}^*L(-E)$ is spanned by 
global sections of $L\otimes\sI$. This condition is weaker than the condition 
such that $L\otimes\sI$ is globally generated. 
\end{definition}

\begin{remark}\label{pol_ryaku}
If $X$ is a Fano manifold, then we omit the polarization $(X, -K_X)$. 
More precisely, 
\emph{slope stability of $X$ along a closed subscheme $Z\subset X$} 
is nothing but slope stability of $(X, -K_X)$ along a closed subscheme $Z\subset X$. 
\end{remark}

The following is a fundamental result. 

\begin{thm}[{\cite{don}, \cite{RT}}]\label{KE_stable}
Let $(X, L)$ be a polarized manifold. If $(X, L)$ admits a K\"ahler metric with 
constant scalar curvature, then $(X, L)$ is slope semistable along any closed subscheme 
$Z\subset X$. 

In particular, for a Fano manifold $X$, if $X$ admits a K\"ahler-Einstein metric 
then $X$ is slope semistable along any closed subscheme $Z\subset X$. 
\end{thm}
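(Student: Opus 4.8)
The plan is to deduce both assertions from the $K$-stability formalism together with the evaluation, due to Ross and Thomas, of the Donaldson--Futaki invariant of the deformation to the normal cone; the second assertion is just the first applied to $L=-K_X$, so I concentrate on a closed subscheme $Z\subset X$ of a general polarized manifold $(X,L)$. Recall that a \emph{test configuration} for $(X,L)$ is a flat projective $\C^*$-equivariant family $\pi\colon\mathcal{X}\to\mathbb{A}^1$, isomorphic to the product $X\times(\mathbb{A}^1\setminus\{0\})$ away from the central fibre, equipped with a relatively ample $\C^*$-linearized polarization $\mathcal{L}$, and that one attaches to it the \emph{Donaldson--Futaki invariant} $\mathrm{DF}(\mathcal{X},\mathcal{L})$, extracted from the subleading terms in the joint asymptotics, as $k\to\infty$, of $\dim H^0(\mathcal{X}_0,\mathcal{L}_0^{\otimes k})$ and of the total weight of the induced $\C^*$-action on this space; $(X,L)$ is \emph{$K$-semistable} if $\mathrm{DF}\geq 0$ for every test configuration. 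The first input I would invoke is Donaldson's theorem, proved via the lower bound for the Calabi functional, that a polarized manifold admitting a constant scalar curvature K\"ahler metric in $c_1(L)$ is $K$-semistable.

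The second, and main, input is a distinguished family of test configurations attached to $Z$. For rational $c$ with $0<c<\epsilon(Z)$, form $\mathcal{X}:=\Bl_{Z\times\{0\}}(X\times\mathbb{A}^1)$ with its projection to $\mathbb{A}^1$ and the $\C^*$-action induced by scaling the $\mathbb{A}^1$-factor, polarized by $\mathcal{L}:=p_1^*L-cE$ (the pullback to $\mathcal{X}$ being understood), where $E$ is the exceptional Cartier divisor; the strict inequality $c<\epsilon(Z)$ makes $\mathcal{L}$ relatively ample, so this is an honest test configuration. (When $c=\epsilon(Z)\in\Q$ one gets a test configuration precisely when the global sections of $L^k\otimes\sI_Z^{kc}$ saturate for $k\gg0$, which is the extra hypothesis appearing in the definition of slope stability; for semistability this borderline case is not needed.) I would then compute $\mathrm{DF}(\mathcal{X},\mathcal{L})$ by equivariant Riemann--Roch on the central fibre: the relevant dimensions and $\C^*$-weights on $H^0(\mathcal{X}_0,\mathcal{L}_0^{\otimes k})$ are controlled by the polynomials $a_0(x)$ and $a_1(x)$ of Definition \ref{slope_review}, with $x$ tracking the normalized weight, and reorganizing the double sum over $k$ and over the weight into integrals over $(0,c)$ shows that $\mathrm{DF}(\mathcal{X},\mathcal{L})$ equals a positive multiple of $\mu(X)-\mu_c(Z)$, the positive constant being governed by $\int_0^c a_0(x)\,dx$. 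The delicate point is the correction term $\frac{1}{2}a_0'(x)$ occurring in $\mu_c(Z)$: it is the second-order (boundary) contribution in the weight expansion on the central fibre, and keeping track of it correctly is the crux of the computation.

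Granting these two inputs the conclusion is quick. By $K$-semistability each of the above test configurations has $\mathrm{DF}(\mathcal{X},\mathcal{L})\geq 0$, whence $\mu_c(Z)\leq\mu(X)$ for every rational $c\in(0,\epsilon(Z))$. Since $\mu_c(Z)$ is by construction a ratio of polynomial integrals in $c$, with positive denominator, it depends continuously on $c\in(0,\epsilon(Z)]$; letting $c$ range over the rationals and passing to the limit therefore gives $\mu_c(Z)\leq\mu(X)$ for all $c\in(0,\epsilon(Z)]$, which is exactly slope semistability of $(X,L)$ along $Z$. Specializing to $L=-K_X$, and using that a K\"ahler--Einstein metric on a Fano manifold is a constant scalar curvature K\"ahler metric in $c_1(-K_X)$, yields the ``in particular'' statement.

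The principal obstacle is the second input: identifying $\mathrm{DF}(\mathcal{X},\mathcal{L})$ with a positive multiple of $\mu(X)-\mu_c(Z)$. This splits into (i) verifying that $\mathcal{X}$ is a flat projective $\C^*$-scheme over $\mathbb{A}^1$ with relatively ample $\mathcal{L}$ when $c<\epsilon(Z)$, which is precisely where the Seshadri constant enters, and (ii) the asymptotic bookkeeping of the dimensions and weights of $H^0(\mathcal{X}_0,\mathcal{L}_0^{\otimes k})$, handling with care the boundary terms responsible for the $\frac{1}{2}a_0'(x)$ contribution. These steps, together with Donaldson's passage from a cscK metric to $K$-semistability, are the content of \cite{RT} and \cite{don}.
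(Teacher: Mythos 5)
The paper offers no proof of this theorem: it is quoted as a known result with the argument deferred entirely to \cite{don} and \cite{RT}. Your sketch correctly reconstructs exactly that argument --- Donaldson's cscK $\Rightarrow$ $K$-semistability via the Calabi functional bound, combined with Ross--Thomas's identification of the Donaldson--Futaki invariant of the deformation to the normal cone with a positive multiple of $\mu(X)-\mu_c(Z)$, plus the continuity argument to reach the endpoint $c=\epsilon(Z)$ --- so it matches the intended (cited) proof.
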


\section{Slope stability of Fano manifolds along smooth subvarieties or divisors}

In this section, we fix the notation. 

\begin{notation}\label{nttn}

We set that $X$ is a Fano $n$-fold and $Z\subset X$ is a smooth subvariety of codimension $r\geq 2$ 
or an effective divisor (not necessary smooth). 
If $Z$ is an effective divisor on $X$,  we set $r:=1$. 
We set $\sigma:=\Bl_Z\colon\hat{X}\rightarrow X$. 
Let $E\subset\hat{X}$ be the divisor 
which satisfies $\sO_{\hat{X}}(-E)\simeq\sigma^{-1}\sI_Z$ 
(i.e., if $r\geq 2$ then $E$ is the exceptional divisor of $\sigma$, and if $r=1$ then $\sigma$ is the identity morphism 
and $E=Z$). 
\end{notation}

Under the notation, we consider slope stability of $X$ along $Z\subset X$.
We can show that 
\begin{eqnarray*}
a_0(x) & = & \frac{1}{n!}\vol_{\hat{X}}\left(\sigma^*\left(-K_X\right)-xE\right),\\
a_1(x) & = & \frac{1}{2\cdot (n-1)!}\left(-K_{\hat{X}}\cdot\left(\sigma^*\left(-K_X\right)-xE\right)^{n-1}\right),\\
\mu(X) & = & \frac{n}{2}
\end{eqnarray*}
by the weak Riemann-Roch formula (cf.\ \cite{ylee}). 
Thus for $0<c\leq\epsilon(Z)$, we have 
\begin{eqnarray*}
 &  & \mu_c(Z)<\mu(X)\\
 & \Leftrightarrow & \int_0^c(r-x)\left(E\cdot\left(\sigma^*\left(-K_X\right)-xE\right)^{n-1}\right)dx>0\\
 & \Leftrightarrow  & r\vol_X(-K_X)+(c-r)\vol_{\hat{X}}\left(\sigma^*\left(-K_X\right)-cE\right)-\int_0^c \vol_{\hat{X}}\left(\sigma^*\left(-K_X\right)-xE\right)dx>0.
\end{eqnarray*}

We set
\begin{eqnarray*}
\xi_c(Z) & := & r\vol_X(-K_X)+(c-r)\vol_{\hat{X}}\left(\sigma^*\left(-K_X\right)-cE\right)
-\int_0^c\vol_{\hat{X}}\left(\sigma^*\left(-K_X\right)-xE\right)dx.
\end{eqnarray*}
Since $\sigma^*(-K_X)-cE$ is ample for any $0<c<\epsilon(Z)$, we have 
\[
\frac{d}{dc}\left(\xi_c\left(E\right)\right)
=n(r-c)\left(E\cdot\left(\sigma^*\left(-K_X\right)-cE\right)^{n-1}\right)
\begin{cases}
>0 & (\text{if }0<c<\min\{r,\epsilon(Z)\}),\\
<0 & \left(\text{if }r<c<\epsilon(Z)\,\, \left(\text{if }\epsilon(Z)>r\right)\right).\\
\end{cases}
\]
Assume that $X$ is not slope stable along $Z$. Then $\xi_c(Z)\leq 0$ 
for some $0<c\leq\epsilon(Z)$. 
Hence $\epsilon(Z)>r$ and $\xi_c(Z)\geq\xi_{\epsilon(Z)}(Z)$ 
by the above argument. 
In particular, $\hat{X}$ is a Fano manifold 
and hence $\Nef(\hat{X})$ is a rational polyhedral cone spanned by semiample divisors 
(in particular, $\epsilon(Z)\in\Q_{>0}$ holds). 
Therefore we have the following result. 

\begin{proposition}\label{divslope}
Let $X$ be a Fano $n$-fold and $Z\subset X$ be a divisor or a smooth subvariety 
of codimension $r\geq 1$ $($if $Z$ is a divisor, then we set $r=1$$)$. 
Then $X$ is slope stable $($resp.\ slope semistable$)$ along $Z$ if and only if 
$\xi(Z)>0$ $($resp.\ $\geq 0)$, where 
\begin{eqnarray*}
\xi(Z) &:=& \xi_{\epsilon(Z)}(Z)\\
& = & r\vol_X(-K_X)+\left(\epsilon\left(Z\right)-r\right)
\vol_{\hat{X}}\left(\sigma^*\left(-K_X\right)-\epsilon\left(Z\right)E\right)\\
& - & \int_0^{\epsilon(Z)}\vol_{\hat{X}}\left(\sigma^*\left(-K_X\right)-xE\right)dx\\
& = & n\int_0^{\epsilon(Z)}(r-x)\left(E\cdot\left(\sigma^*\left(-K_X\right)-xE\right)^{n-1}\right)dx.
\end{eqnarray*}
\end{proposition}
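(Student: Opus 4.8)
The plan is essentially to assemble the pieces that the excerpt has already laid out and check that nothing was left dangling. The whole computation upstream of the statement does almost all of the work: the weak Riemann--Roch formula gives $a_0(x)$, $a_1(x)$ and $\mu(X)=n/2$; integrating and using $a_0'(x)/2$ together with the identity $(-K_{\hat X})=\sigma^*(-K_X)-(r-1)E$ (and $K_{\hat X}=\sigma^*K_X+(r-1)E$ for $r\ge 2$, the identity being trivial for $r=1$) reduces the inequality $\mu_c(Z)<\mu(X)$ to $\int_0^c (r-x)\bigl(E\cdot(\sigma^*(-K_X)-xE)^{n-1}\bigr)\,dx>0$, which is exactly $\xi_c(Z)>0$ after integration by parts in the $x$-variable. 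So the first step I would carry out is simply to record these reductions carefully, noting that they hold for every $c\in(0,\epsilon(Z)]$ simultaneously, so that slope (semi)stability along $Z$ is equivalent to $\xi_c(Z)>0$ (resp. $\ge 0$) for \emph{all} such $c$.

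Second, I would analyze the single-variable function $c\mapsto\xi_c(Z)$ on $(0,\epsilon(Z)]$ using the derivative formula $\tfrac{d}{dc}\xi_c(Z)=n(r-c)\bigl(E\cdot(\sigma^*(-K_X)-cE)^{n-1}\bigr)$. Since $\sigma^*(-K_X)-cE$ is ample for $0<c<\epsilon(Z)$ and $E$ is a nonzero effective divisor, the intersection number $\bigl(E\cdot(\sigma^*(-K_X)-cE)^{n-1}\bigr)$ is strictly positive there; hence $\xi_c$ is strictly increasing on $(0,\min\{r,\epsilon(Z)\})$ and strictly decreasing on $(r,\epsilon(Z))$ when $\epsilon(Z)>r$. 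Combined with $\xi_0(Z)=0$ and continuity, this shows: if $\epsilon(Z)\le r$ then $\xi_c(Z)>0$ for all $c\in(0,\epsilon(Z)]$ and $X$ is automatically slope stable along $Z$; and if $\epsilon(Z)>r$ then $\min_{c\in(0,\epsilon(Z)]}\xi_c(Z)=\xi_{\epsilon(Z)}(Z)=\xi(Z)$ (the infimum over the open part is approached only as $c\to 0^+$, giving $0$, or at the right endpoint). So in all cases the sign condition ``$\xi_c(Z)>0$ for all $c$'' (resp.\ $\ge 0$) is equivalent to ``$\xi(Z)>0$'' (resp.\ $\ge 0$), once we observe that the degenerate value $\xi_0(Z)=0$ does not count because the quantifier ranges over $c>0$ and the stability definitions compare strictly on the open interval.

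Third, I would address the subtlety hidden in the definition of slope stability at the endpoint $c=\epsilon(Z)$: the strict inequality at $c=\epsilon(Z)$ is only required when $\epsilon(Z)\in\Q$ and the global sections of $L^k\otimes\sI_Z^{k\epsilon(Z)}$ saturate for $k\gg 0$. Here is where the hypothesis that $X$ is Fano and that $\hat X$ is itself Fano enters: since $\sigma^*(-K_X)-\epsilon(Z)E$ lies on the boundary of $\Nef(\hat X)$ and, for $\epsilon(Z)>r$, one computes $-K_{\hat X}-t\bigl(\sigma^*(-K_X)-\epsilon(Z)E\bigr)$ to be ample for small $t>0$ (using $-K_{\hat X}=\sigma^*(-K_X)-(r-1)E$ and $\epsilon(Z)>r>r-1$), $\hat X$ is a (weak) Fano manifold. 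Hence $\Nef(\hat X)$ is a rational polyhedral cone generated by semiample classes, so $\epsilon(Z)\in\Q_{>0}$ and $\sigma^*(-K_X)-\epsilon(Z)E$ is semiample; thus for $k\gg0$ divisible enough, $L^k\otimes\sI_Z^{k\epsilon(Z)}$ does saturate, which means the endpoint condition $\xi_{\epsilon(Z)}(Z)>0$ is genuinely part of the criterion for slope stability. Conversely, if $\epsilon(Z)\le r$ the endpoint never causes trouble since $\xi$ is positive there anyway.

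The main obstacle, and the only place where real care is needed rather than bookkeeping, is this last point: making the equivalence at the boundary $c=\epsilon(Z)$ airtight. One must verify that $\hat X$ is Fano (or at least that $\sigma^*(-K_X)-\epsilon(Z)E$ is semiample) precisely in the regime $\epsilon(Z)>r$ where the endpoint value $\xi(Z)$ is the binding constraint, so that the saturation clause in Definition \ref{stable_semistable} is automatically triggered and the criterion ``$\xi(Z)\ge 0$ with equality excluded'' faithfully captures slope stability; and one must separately dispatch the easy case $\epsilon(Z)\le r$ where $X$ is slope stable unconditionally and $\xi(Z)>0$ trivially. Once these are in place, the final rewriting $\xi(Z)=n\int_0^{\epsilon(Z)}(r-x)\bigl(E\cdot(\sigma^*(-K_X)-xE)^{n-1}\bigr)\,dx$ is just the integration-by-parts identity read backwards, and the proposition follows.
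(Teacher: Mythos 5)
Your proposal is correct and follows essentially the same route as the paper, whose proof of this proposition is exactly the displayed computation preceding its statement: the Riemann--Roch reduction of $\mu_c(Z)<\mu(X)$ to $\xi_c(Z)>0$, the monotonicity analysis of $c\mapsto\xi_c(Z)$ via its derivative, and the observation that $\epsilon(Z)>r$ forces $\hat{X}$ to be Fano so that $\Nef(\hat{X})$ is rational polyhedral with semiample generators and the endpoint $c=\epsilon(Z)$ genuinely enters the stability test. Your treatment of the saturation clause at $c=\epsilon(Z)$ is in fact slightly more explicit than the paper's.
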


\begin{remark}\label{icchi}
For a Fano $n$-fold $X$ and $Z\subset X$ a divisor or a smooth subvariety, 
the definition of slope stability of $X$ along $Z$ is equivalent to the definition 
in \cite{ylee} and \cite{fjt} by the above argument. 
\end{remark}

\begin{remark}\label{sesh}
If $X$ is not slope stable along $Z$, then $\epsilon(Z)>r$ holds by the above argument. 
This result has been already known in \cite[Lemma 2.10]{ylee}. 
In fact, Yuji Odaka pointed out to the author that 
if $X$ is not slope stable (resp.\ not slope semistable) along $Z$ then $\epsilon(Z)\geq r(n+1)/n$ (resp.\ $\epsilon(Z)>r(n+1)/n$) holds. 
See \cite[Proposition 4.4]{OS} for detail. 
\end{remark}

Now, we show that slope stability of Fano manifolds along smooth subvarieties 
can reduce to slope stability of Fano manifolds along divisors. 

\begin{proposition}\label{reduction}
Let $X$ be a Fano $n$-fold and let 
$Z\subset X$ be a smooth subvariety of codimension $r\geq 2$. 
Let $\sigma:=\Bl_Z\colon\hat{X}\rightarrow X$ and let $E\subset\hat{X}$ be 
the exceptional divisor of $\sigma$. 
If $X$ is not slope stable along $Z$, then $\hat{X}$ itself is a Fano $n$-fold and $\hat{X}$ is not slope semistable along $E$. 
\end{proposition}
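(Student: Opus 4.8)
The plan is to compare the slope invariant $\xi(E)$ for the divisor $E \subset \hat X$ directly with the slope invariant $\xi(Z)$ for $Z \subset X$, using Proposition \ref{divslope} at both ends. Since $X$ is not slope stable along $Z$, Remark \ref{sesh} gives $\epsilon(Z) > r \geq 2$, and the argument preceding Proposition \ref{divslope} already shows that $\hat X = \Bl_Z(X)$ is a Fano manifold: indeed $-K_{\hat X} = \sigma^*(-K_X) - (r-1)E$, and for $c$ slightly below $\epsilon(Z)$ the divisor $\sigma^*(-K_X) - cE$ is ample, so $\epsilon(Z) > r > r-1$ forces $-K_{\hat X}$ to be ample. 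Thus $\hat X$ is a legitimate Fano $n$-fold and Proposition \ref{divslope} applies to the pair $(\hat X, E)$.

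The key computation is to express the Seshadri constant $\epsilon(E) = \epsilon(\sI_E; (\hat X, -K_{\hat X}))$ and the volumes appearing in $\xi(E)$ in terms of the data on $X$. Since $E$ is already a divisor on $\hat X$, the blow-up in the definition of $\xi(E)$ is trivial, so
\[
\xi(E) = n\int_0^{\epsilon(E)} (1-y)\bigl(E \cdot (-K_{\hat X} - yE)^{n-1}\bigr)\,dy
= n\int_0^{\epsilon(E)} (1-y)\bigl(E \cdot (\sigma^*(-K_X) - (y + r - 1)E)^{n-1}\bigr)\,dy.
\]
Substituting $x = y + r - 1$ turns this into $n\int_{r-1}^{\,r-1+\epsilon(E)} (r - x)\bigl(E \cdot (\sigma^*(-K_X) - xE)^{n-1}\bigr)\,dx$. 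On the other hand, $-K_{\hat X} - yE = \sigma^*(-K_X) - (y+r-1)E$ is nef exactly when $y + r - 1 \le \epsilon(Z)$, so $\epsilon(E) = \epsilon(Z) - (r-1)$, and the upper limit of the transformed integral becomes exactly $\epsilon(Z)$. Therefore
\[
\xi(E) = n\int_{r-1}^{\epsilon(Z)} (r-x)\bigl(E \cdot (\sigma^*(-K_X) - xE)^{n-1}\bigr)\,dx.
\]
Comparing with $\xi(Z) = n\int_0^{\epsilon(Z)} (r-x)\bigl(E \cdot (\sigma^*(-K_X) - xE)^{n-1}\bigr)\,dx$ from Proposition \ref{divslope}, the difference is
\[
\xi(Z) - \xi(E) = n\int_0^{r-1} (r-x)\bigl(E \cdot (\sigma^*(-K_X) - xE)^{n-1}\bigr)\,dx,
\]
and on the interval $0 \le x \le r-1 < \epsilon(Z)$ the divisor $\sigma^*(-K_X) - xE$ is ample, so the intersection number $\bigl(E \cdot (\sigma^*(-K_X) - xE)^{n-1}\bigr)$ is nonnegative (it is the degree of the effective divisor $E$ against a nef class), and $r - x > 0$ there; hence $\xi(Z) - \xi(E) \ge 0$, i.e. $\xi(E) \le \xi(Z)$.

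To conclude: because $X$ is not slope stable along $Z$, Proposition \ref{divslope} gives $\xi(Z) \le 0$, and by the displayed inequality $\xi(E) \le \xi(Z) \le 0$, so again by Proposition \ref{divslope} (now for $(\hat X, E)$, with $r = 1$) $\hat X$ is not slope semistable along $E$. The main obstacle I anticipate is making the Seshadri-constant bookkeeping airtight — in particular verifying that the nef threshold for $\sigma^*(-K_X) - cE$ really is $\epsilon(Z)$ as seen from $\hat X$, and that no boundary subtlety in the definition of slope stability (the saturation condition at $c = \epsilon$) interferes; but since we only need the weak conclusion "not slope semistable," the strict/non-strict distinction in Proposition \ref{divslope} is on the favorable side, and the argument should go through cleanly. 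A minor point to check is that $\bigl(E \cdot (\sigma^*(-K_X) - xE)^{n-1}\bigr) \ge 0$ for ample $\sigma^*(-K_X) - xE$, which is immediate since $E$ is effective.
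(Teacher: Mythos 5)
Your overall strategy is exactly the paper's: identify $\epsilon(E,\hat{X})=\epsilon(Z,X)-(r-1)$, substitute $x=y+r-1$ in the integral defining $\xi(E)$, and write $\xi(E)$ as $\xi(Z)$ minus the contribution of the integral over $[0,r-1]$. However, there is a genuine gap at the very last step. You establish only
\[
\xi(Z)-\xi(E)=n\int_0^{r-1}(r-x)\left(E\cdot\left(\sigma^*\left(-K_X\right)-xE\right)^{n-1}\right)dx\geq 0,
\]
and conclude $\xi(E)\leq\xi(Z)\leq 0$. But Proposition \ref{divslope} characterizes failure of slope \emph{semistability} by the strict inequality $\xi(E)<0$; the non-strict bound $\xi(E)\leq 0$ only shows that $\hat{X}$ is not slope \emph{stable} along $E$, which is weaker than the claimed conclusion. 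If $\xi(E)=0$ were possible, $\hat{X}$ would still be slope semistable along $E$.

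The missing observation — which is the one the paper uses — is that the correction integral is \emph{strictly} positive. The interval $(0,r-1)$ is nonempty because $r\geq 2$, and on it $\sigma^*(-K_X)-xE$ is ample (since $\epsilon(Z)>r>r-1$); the intersection of a nonzero effective divisor with the $(n-1)$-st power of an ample class is strictly positive, and $r-x>0$ there. Hence $\xi(Z)-\xi(E)>0$, so $\xi(E)<\xi(Z)\leq 0$, which gives the strict inequality needed. You had all the ingredients (you even note the ampleness) but downgraded to ``nonnegative \dots against a nef class,'' which is precisely where the argument falls short of the statement. With that one-line strengthening your proof coincides with the paper's.
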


\begin{proof}
We have already seen that $\hat{X}$ is a Fano manifold. We note that 
\[
r<\epsilon(Z,X)=\epsilon(E,\hat{X})+r-1.
\]
Hence we have 
\begin{eqnarray*}
n\xi(E) & = & \int_0^{\epsilon(E,\hat{X})}(1-x)
\left(E\cdot \left(-K_{\hat{X}}-xE\right)^{n-1}\right)dx\\
& = & \int_0^{\epsilon(Z,X)-(r-1)}
\left(E\cdot\left(\sigma^*\left(-K_X\right)+\left(1-r-x\right)E\right)^{n-1}\right)dx\\
& = & \int_{r-1}^{\epsilon(Z,X)}(r-x)
\left(E\cdot\left(\sigma^*\left(-K_X\right)-xE\right)^{n-1}\right)dx\\
& = & n\xi(Z)-\int_0^{r-1}(r-x)
\left(E\cdot\left(\sigma^*\left(-K_X\right)-xE\right)^{n-1}\right)dx\\
& < & 0,
\end{eqnarray*}
since $\sigma^*(-K_X)-xE$ is ample for any $0<x<r-1\,\, (<\epsilon(Z,X))$. 
\end{proof}

\section{First properties}

\subsection{Convexity of the volume function}

In this section, we consider slope stability of Fano manifolds 
in terms of the convexity of the volume function $\vol_{\hat{X}}(\sigma^*(-K_X)-xE)$ (under Notation \ref{nttn}). 

\begin{proposition}\label{convex}
We fix Notation \ref{nttn}. 
\begin{enumerate}
\renewcommand{\theenumi}{\arabic{enumi}}
\renewcommand{\labelenumi}{$(\theenumi)$}
\item\label{convex1}
If $\sN_{Z/X}^\vee$ $($the dual of the normal bundle$)$ is nef 
and $\epsilon(Z)\geq 2r$ holds, 
then $X$ is not slope stable along $Z$. 
\item\label{convex2}
Furthermore, $X$ is not slope semistable along $Z$ if we assume the assumption 
in \eqref{convex1} and one of the following holds: 
$r\geq 2$, $\epsilon(Z)>2r$ or $\sN_{Z/X}^\vee$ is ample.  
\item\label{convex3}
If $r=1$, $Z^2$ is a nonzero effective cycle and $\epsilon(Z)\leq 2$ holds, then $X$ is slope stable along $Z$. 
\end{enumerate}
\end{proposition}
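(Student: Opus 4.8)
\textbf{Proof proposal for Proposition \ref{convex}.}

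The plan is to analyze the sign of the integrand $h(x):=(r-x)\bigl(E\cdot(\sigma^*(-K_X)-xE)^{n-1}\bigr)$ appearing in the formula $n\xi(Z)=n\int_0^{\epsilon(Z)}h(x)\,dx$ from Proposition \ref{divslope}, together with the convexity properties of the volume function $v(x):=\vol_{\hat X}(\sigma^*(-K_X)-xE)$. The key observation for \eqref{convex1} and \eqref{convex2} is that when $\sN_{Z/X}^\vee$ is nef, the class $\sigma^*(-K_X)-xE$ remains nef (indeed one expects semiample) well beyond $x=r$; more precisely, nefness of $-E|_E$ on the exceptional divisor (in the divisor case, of $Z|_Z=-Z^\vee$) is exactly what forces $(E\cdot(\sigma^*(-K_X)-xE)^{n-1})\geq 0$ for the relevant range, so $h(x)$ has the sign of $(r-x)$. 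Then I would split the integral at $x=r$: since $\frac{d}{dc}\xi_c(Z)=n(r-c)(E\cdot(\sigma^*(-K_X)-cE)^{n-1})$ is positive on $(0,r)$ and nonpositive on $(r,\epsilon(Z))$ (using $\epsilon(Z)\geq 2r$), the value $\xi(Z)=\xi_{\epsilon(Z)}(Z)$ is compared against $\xi_r(Z)$ and $\xi_{2r}(Z)$. The heart is a symmetry/monotonicity estimate: $\int_r^{2r}h(x)\,dx\leq -\int_0^r h(x)\,dx$ whenever the function $x\mapsto (E\cdot(\sigma^*(-K_X)-xE)^{n-1})$ is suitably monotone, which follows because convexity of $v$ gives that $v'(x)=-n(E\cdot(\sigma^*(-K_X)-xE)^{n-1})$ is nondecreasing, hence $(E\cdot(\sigma^*(-K_X)-xE)^{n-1})$ is nonincreasing in $x$ on the nef range. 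A change of variables $x\mapsto 2r-x$ on $[r,2r]$ then yields $\xi_{2r}(Z)\leq\xi_0(Z)=r\vol_X(-K_X)\cdot 0$... so in fact $\xi_{2r}(Z)\leq 0$, and since $\xi(Z)\leq\xi_{2r}(Z)$ when $\epsilon(Z)\geq 2r$, we conclude $\xi(Z)\leq 0$, giving non-stability; strict inequality in \eqref{convex2} comes from the extra hypotheses forcing one of the intermediate inequalities to be strict (ampleness of $\sN^\vee$ makes the relevant intersection numbers strictly positive on an open range, $\epsilon(Z)>2r$ gives a strictly negative contribution on $(2r,\epsilon(Z))$, and $r\geq 2$ forces $\epsilon(Z)=\epsilon(E,\hat X)+r-1>2r$ by Remark \ref{sesh} combined with the nef hypothesis, reducing to the previous case).

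For \eqref{convex3}, with $r=1$ I would instead show $\xi(Z)>0$ directly. Here $Z^2$ being a nonzero effective cycle means $Z|_Z$ is a nonzero effective class, so $(Z\cdot(-K_X-xZ)^{n-1})=(-K_X-xZ)^{n-1}\cdot Z|_Z$ restricted to $Z$ can be controlled; since $-K_X-xZ$ is ample for $0<x<\epsilon(Z)$ and $Z^2$ is effective and nonzero, this intersection number is strictly positive on $(0,\epsilon(Z))$. Then $h(x)=(1-x)(Z\cdot(-K_X-xZ)^{n-1})$ is strictly positive on $(0,1)$ and (since $\epsilon(Z)\leq 2$) at worst mildly negative on $(1,\epsilon(Z))\subset(1,2)$. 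The required estimate is $\int_0^1 h(x)\,dx > -\int_1^{\epsilon(Z)} h(x)\,dx$, i.e. the positive lobe dominates. Using convexity of $v$ once more, the function $g(x):=(Z\cdot(-K_X-xZ)^{n-1})$ is nonincreasing, so on the symmetric interval $[1-t,1+t]\subset[0,2]$ we have $g(1-t)\geq g(1+t)$; pairing $x=1-t$ with $x=1+t$ and noting $(1-x)$ flips sign, the contribution of the pair is $t\,(g(1-t)-g(1+t))\geq 0$, and since $\epsilon(Z)\leq 2$ the interval $(1,\epsilon(Z))$ is entirely matched by $(2-\epsilon(Z),1)\subseteq(0,1)$, leaving an unmatched strictly positive piece $\int_0^{2-\epsilon(Z)}(1-x)g(x)\,dx>0$. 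Hence $\xi(Z)>0$; the saturation subtlety at $c=\epsilon(Z)$ in Definition \ref{stable_semistable} does not arise because the strict inequality already holds.

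I expect the main obstacle to be making precise the claim that nefness of $\sN_{Z/X}^\vee$ propagates to the statement ``$(E\cdot(\sigma^*(-K_X)-xE)^{n-1})\geq 0$ on $[0,\epsilon(Z)]$'' and, more delicately, that this intersection number is \emph{monotone nonincreasing} — the latter is where convexity of the volume function is used, but one must be careful that $v$ is only guaranteed convex and $C^1$ on the big cone, and that the derivative formula $v'(x)=-n(E\cdot(\sigma^*(-K_X)-xE)^{n-1})$ is valid precisely on the nef range, which here extends past $x=2r$ by the Seshadri hypothesis. A secondary technical point is handling the del Pezzo/low-dimensional edge cases where $Z^2$ might be numerically trivial rather than a nonzero effective cycle — but that is excluded by hypothesis in \eqref{convex3}, so it only needs to be flagged. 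Once the sign and monotonicity of $g$ are established, the rest is the elementary pairing argument sketched above.
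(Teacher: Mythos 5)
Your overall strategy --- reading off the sign of $\xi(Z)=n\int_0^{\epsilon(Z)}(r-x)\,g(x)\,dx$ with $g(x):=(E\cdot(\sigma^*(-K_X)-xE)^{n-1})$ from the second derivative of the volume function $f(x):=\vol_{\hat X}(\sigma^*(-K_X)-xE)$ together with a symmetric pairing about $x=r$ --- is exactly the paper's (the paper packages the pairing as the trapezoid comparison between $\int_0^{\epsilon}f$ and $\tfrac{\epsilon}{2}(f(0)+f(\epsilon))$). But in parts (1) and (2) you have the convexity the wrong way around, and this is not cosmetic. Since $\sO_{\hat X}(-E)|_E\simeq\sO_{\pr(\sN_{Z/X}^\vee)}(1)$, nefness of $\sN_{Z/X}^\vee$ gives $f''(x)=n(n-1)(E^2\cdot(\sigma^*(-K_X)-xE)^{n-2})\le 0$: your $v=f$ is \emph{concave}, so $f'=-ng$ is nonincreasing and $g$ is \emph{nondecreasing}. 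You assert the opposite (``convexity of $v$ \dots hence $(E\cdot(\sigma^*(-K_X)-xE)^{n-1})$ is nonincreasing''). Your key inequality $\int_r^{2r}h\,dx\le-\int_0^{r}h\,dx$ becomes, after the substitution $x\mapsto 2r-x$, the statement $\int_0^{r}(r-u)\bigl(g(u)-g(2r-u)\bigr)du\le 0$, which requires precisely that $g$ be nondecreasing; with the monotonicity you state, the inequality reverses and the step proves nothing. The correct sign rescues the argument, so the fix is one line, but as written the central step of (1) does not close.

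Second, your treatment of the strictness for $r\ge 2$ in (2) is wrong: Remark \ref{sesh} (even in Odaka's sharpened form $\epsilon(Z)\ge r(n+1)/n$) gives $\epsilon(Z)>r$, not $\epsilon(Z)>2r$, and the case $\epsilon(Z)=2r$ with $r\ge 2$ is not excluded, so you cannot ``reduce to the previous case.'' Strictness must come from elsewhere: the trapezoid bound gives directly $\xi(Z)\le(1-\tfrac{\epsilon}{2})f(0)+(\tfrac{\epsilon}{2}-r)f(\epsilon)\le(1-r)f(0)<0$ for $r\ge 2$; equivalently, in your pairing language, $g(0)=(E\cdot\sigma^*(-K_X)^{n-1})=0$ when $r\ge 2$ while $g(2r)>0$, so the paired integrand is strictly negative near $u=0$. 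Part (3) is correct and agrees with the paper ($Z^2$ a nonzero effective cycle makes $f$ strictly convex, so $g$ really is strictly decreasing there and the pairing goes the right way); only note that when $\epsilon(Z)=2$ your ``unmatched piece'' is empty, and the strict inequality must then be drawn from the strict monotonicity of $g$ rather than from that leftover integral.
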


We note that a vector bundle $\sE$ on a projective variety $Y$ is \emph{nef} (resp. \emph{ample}) if the corresponding tautological line bundle 
$\sO_{\pr_{Y}(\sE)}(1)$ on $\pr_{Y}(\sE)$ is nef (resp. ample). 

\begin{proof}
We can assume $\epsilon(Z)>r$ by Remark \ref{sesh}. We write $\epsilon:=\epsilon(Z)$ for simplicity. 
We define $f(x):=\vol_{\hat{X}}\left(\sigma^*\left(-K_X\right)-xE\right)$. 
Then we can write 
\[
\xi(Z)=f(0)+(\epsilon-r)f(\epsilon)-\int_0^\epsilon f(x)dx.
\]
We note that 
\begin{eqnarray*}
\frac{df}{dx}(x) & = & -n\left(E\cdot\left(\sigma^*\left(-K_X\right)-xE\right)^{n-1}\right)<0\quad (\text{for any }0<x<\epsilon),\\
\frac{d^2f}{dx^2}(x) & = & n(n-1)\left(E^2\cdot\left(\sigma^*\left(-K_X\right)-xE\right)^{n-2}\right).
\end{eqnarray*}
We recall that $\sO_{\hat{X}}(-E)|_{E}\simeq\sO_{\pr_{Y}(\sE)}(1)$. 
Hence $f(x)$ is a convex upward (resp.\ strictly convex upward) and strictly monotone decreasing function over an interval $(0$, $\epsilon)$ 
if $\sN_{Z/X}^\vee$ is nef (resp.\ ample). 
Then \eqref{convex1} and \eqref{convex2} 
follows immediately. 
The proof of \eqref{convex3} is same as those of \eqref{convex1} and \eqref{convex2}. 
\end{proof}

\subsection{Product cases}\label{prod_section}

We consider the case that a Fano manifold $X$ can be decomposed into 
the product $X=X_1\times X_2$. It is easy to show that both $X_1$ and $X_2$ are 
Fano manifolds, the vector space $\ND(X)$ is naturally decomposed into 
$\ND(X)=\ND(X_1)\oplus\ND(X_2)$ and the cones can be written as 
$\Eff(X)=\Eff(X_1)+\Eff(X_2)$, $\Nef(X)=\Nef(X_1)+\Nef(X_2)$ under the decomposition, 
respectively. We set $n_i:=\dim X_i$ $(i=1$, $2)$. 

\begin{proposition}\label{prod1}
Let $D_1\subset X_1$ be a divisor on $X_1$. 
Then slope stability $($resp.\ slope semistability$)$ 
of $X_1$ along $D_1$ is equivalent to slope stability $($resp.\ slope semistability$)$ 
of $X$ along $p_1^*D_1$.
\end{proposition}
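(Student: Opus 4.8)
The strategy is to compute the quantity $\xi$ from Proposition~\ref{divslope} for the divisor $Z = p_1^*D_1$ on $X = X_1\times X_2$ and compare it with $\xi$ for $D_1$ on $X_1$. First I would observe that since $r=1$ for a divisor, the blowup $\sigma$ is the identity and $E = p_1^*D_1$, so everything reduces to intersection-theoretic computations on $X$ itself. The key input is that $-K_X = p_1^*(-K_{X_1}) + p_2^*(-K_{X_2})$ under the decomposition $\ND(X) = \ND(X_1)\oplus\ND(X_2)$, and likewise $-K_X - x p_1^*D_1 = p_1^*(-K_{X_1} - xD_1) + p_2^*(-K_{X_2})$.

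The main computational step is to expand the volume $\vol_X(-K_X - x p_1^*D_1) = \left((-K_X - x p_1^*D_1)^n\right)$ using the binomial theorem, where $n = n_1 + n_2$. Because classes pulled back from $X_1$ and from $X_2$ have vanishing cross intersections beyond the obvious dimension count (a product of more than $n_1$ factors from $X_1$ is zero, similarly for $X_2$), only the single term with exactly $n_1$ factors from $X_1$ and $n_2$ from $X_2$ survives, giving
\[
\vol_X(-K_X - x p_1^*D_1) = \binom{n}{n_1}\vol_{X_1}(-K_{X_1} - xD_1)\cdot\vol_{X_2}(-K_{X_2}).
\]
The same identity holds for every relevant sub-intersection number appearing in $\xi$, so each of the three terms defining $\xi_c$ — namely $r\vol_X(-K_X)$, the boundary term $(c-r)\vol_X(-K_X - cp_1^*D_1)$, and the integral $\int_0^c \vol_X(-K_X - xp_1^*D_1)\,dx$ — picks up the same constant factor $\binom{n}{n_1}\vol_{X_2}(-K_{X_2}) > 0$. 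I would also need to check that the Seshadri constant is unchanged: $\epsilon(p_1^*D_1; (X,-K_X)) = \epsilon(D_1;(X_1,-K_{X_1}))$, which follows because $\sigma^*(-K_X) - cE$ is nef on $X$ if and only if $-K_{X_1} - cD_1$ is nef on $X_1$ and $-K_{X_2}$ is nef on $X_2$ (the latter being automatic), using $\Nef(X) = \Nef(X_1) + \Nef(X_2)$.

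Putting these together, $\xi(p_1^*D_1 \subset X) = \binom{n}{n_1}\vol_{X_2}(-K_{X_2})\cdot\xi(D_1\subset X_1)$, with a strictly positive proportionality constant. By Proposition~\ref{divslope}, $X$ is slope stable (resp.\ semistable) along $p_1^*D_1$ iff the left side is $>0$ (resp.\ $\geq 0$), which by the positivity of the constant holds iff $\xi(D_1\subset X_1) > 0$ (resp.\ $\geq 0$), i.e.\ iff $X_1$ is slope stable (resp.\ semistable) along $D_1$. The only mild subtlety, and the step I would be most careful about, is the saturation condition built into the definition of slope stability at $c = \epsilon(Z)$: I should verify that global sections of $(-K_X)^{\otimes k}\otimes \sI_{p_1^*D_1}^{\otimes k\epsilon}$ saturate iff those of $(-K_{X_1})^{\otimes k}\otimes\sI_{D_1}^{\otimes k\epsilon}$ do, which again follows from the product structure since $\sigma^*(-K_X) - \epsilon E$ splits as a sum of pullbacks and its global sections and base locus factor through the two projections; but this is the point where the bookkeeping deserves explicit attention rather than being waved through.
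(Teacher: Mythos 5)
Your proposal is correct and follows essentially the same route as the paper: both compute $\xi(p_1^*D_1)=\binom{n_1+n_2}{n_1}\vol_{X_2}(-K_{X_2})\cdot\xi(D_1)$ from the product decomposition of $-K_X$ together with $\epsilon(p_1^*D_1,X)=\epsilon(D_1,X_1)$, and conclude from the positivity of the constant via Proposition \ref{divslope}. Your final worry about the saturation condition is superfluous, since Proposition \ref{divslope} already packages slope (semi)stability entirely into the sign of $\xi$.
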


\begin{proof}
Let $\epsilon_1:=\epsilon(D_1, X_1)$. 
Then we know that $\epsilon_1=\epsilon(p_1^*D_1, X)$. 
By the definition of $\xi(p_1^*D_1)$, we have 
\begin{eqnarray*}
\xi(p_1^*D_1) & = & \vol_X(-K_X)+(\epsilon_1-1)\vol_X(-K_X-\epsilon_1p_1^*D_1)
-\int_0^{\epsilon_1}\vol_X(-K_X-xp_1^*D_1)dx\\
 & = & \binom{n_1+n_2}{n_1}\cdot\vol_{X_2}(-K_{X_2})\Bigl\{\vol_{X_1}(-K_{X_1})\\
 & + & (\epsilon_1-1)\vol_{X_1}(-K_{X_1}-\epsilon_1D_1)
-\int_0^{\epsilon_1}\vol_{X_1}(-K_{X_1}-xD_1)dx\Bigr\}\\
 & = & \binom{n_1+n_2}{n_1}\vol_{X_2}(-K_{X_2})\cdot\xi(D_1).
\end{eqnarray*}
Therefore the signs of $\xi(D_1)$ and $\xi(p_1^*D_1)$ are same. 
\end{proof}

\begin{proposition}\label{prod2}
Let $D_i\subset X_i$ be divisors on $X_i$ for $i=1$, $2$. 
If $X_i$ is slope semistable along $D_i$ 
for any $i=1$, $2$, then $X$ is slope stable along $D:=p_1^*D_1+p_2^*D_2$. 
\end{proposition}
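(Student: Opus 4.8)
The plan is to reduce everything to the one–dimensional inequality supplied by Proposition~\ref{divslope}, exploiting the product decomposition of volumes exactly as in the proof of Proposition~\ref{prod1}. Write $D=p_1^*D_1+p_2^*D_2$ and let $\epsilon_i:=\epsilon(D_i,X_i)$. The first step is to identify the Seshadri constant of $D$ with respect to $-K_X$: since $\Nef(X)=\Nef(X_1)+\Nef(X_2)$ and $-K_X=p_1^*(-K_{X_1})+p_2^*(-K_{X_2})$, the divisor $-K_X-xD$ is nef iff $-K_{X_i}-xD_i$ is nef on each factor, so $\epsilon(D,X)=\min\{\epsilon_1,\epsilon_2\}$. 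Without loss of generality assume $\epsilon_1\le\epsilon_2=:\epsilon(D,X)=\epsilon_1$; I will carry both factors symbolically but the effective range of integration is $[0,\epsilon_1]$.

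Next I would expand the volume. By the binomial theorem for intersection numbers on a product,
\[
\vol_X(-K_X-xD)=\sum_{j=0}^{n}\binom{n}{j}\bigl((-K_{X_1}-xD_1)^{j}\bigr)_{X_1}\cdot\bigl((-K_{X_2}-xD_2)^{n-j}\bigr)_{X_2},
\]
and only the term $j=n_1$ survives because $\dim X_i=n_i$; thus
\[
\vol_X(-K_X-xD)=\binom{n}{n_1}\,\vol_{X_1}(-K_{X_1}-xD_1)\,\vol_{X_2}(-K_{X_2}-xD_2).
\]
Feeding this into the formula for $\xi$ from Proposition~\ref{divslope} gives, with $\epsilon:=\epsilon_1$,
\[
\xi(D)=\binom{n}{n_1}\Bigl\{g_1(0)g_2(0)+(\epsilon-1)g_1(\epsilon)g_2(\epsilon)-\int_0^{\epsilon}g_1(x)g_2(x)\,dx\Bigr\},
\]
where $g_i(x):=\vol_{X_i}(-K_{X_i}-xD_i)$. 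The task is to show this braced quantity is strictly positive given that each $X_i$ is slope semistable along $D_i$, i.e.\ given $\xi(D_i)\ge 0$, equivalently (using the last line of Proposition~\ref{divslope}) $\int_0^{\epsilon_i}(1-x)\bigl(D_i\cdot(-K_{X_i}-xD_i)^{n_i-1}\bigr)dx\ge 0$, which after integration by parts is $g_i(0)+(\epsilon_i-1)g_i(\epsilon_i)-\int_0^{\epsilon_i}g_i(x)dx\ge 0$.

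The heart of the argument — and the step I expect to be the main obstacle — is a product-type inequality for the functionals $\Phi_i(g):=g_i(0)+(\epsilon-1)g_i(\epsilon)-\int_0^{\epsilon}g_i\,dx$ versus $\Phi(g_1g_2)$ over a common interval $[0,\epsilon]$. Each $g_i$ is a nonnegative, strictly decreasing polynomial on $[0,\epsilon]$ (strictly decreasing because $-K_{X_i}-xD_i$ stays ample on $(0,\epsilon)$, and actually $g_i(\epsilon)\ge 0$ with equality possible only if $\epsilon<\epsilon_i$ fails). The natural route is to differentiate: set $h(\epsilon):=g_1(0)g_2(0)+(\epsilon-1)g_1(\epsilon)g_2(\epsilon)-\int_0^\epsilon g_1g_2$, note $h'(\epsilon)=g_1g_2+(\epsilon-1)(g_1g_2)'-g_1g_2=(\epsilon-1)(g_1'g_2+g_1g_2')$, and compare with the sign information $g_i'<0$; one gets that $h$ increases on $[0,1]$ and decreases on $[1,\epsilon]$ only if $\epsilon>1$, so $\min h$ on $[0,\epsilon]$ is attained at an endpoint. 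Since $h(0)=g_1(0)g_2(0)-g_1(0)g_2(0)=0$ is ruled out as the relevant value (we need strictness, which will come from $D_i$ being effective so $g_i$ is genuinely nonconstant, or from $\epsilon>1$ which Remark~\ref{sesh} forces when non-stability is in question), the real content is bounding $h(\epsilon)$ from below by the two quantities $\Phi_i\ge 0$. I would look for an identity of the shape $h(\epsilon)=g_2(\epsilon)\Phi_1(g_1)+\text{(something involving }\Phi_2)+\text{(manifestly positive correction)}$, obtained by adding and subtracting $g_2(\epsilon)\int_0^\epsilon g_1$ and using monotonicity $g_2(x)\ge g_2(\epsilon)$ to get $\int_0^\epsilon g_1g_2\le$ (a controllable expression); the leftover positive term should be something like $\int_0^\epsilon g_1(x)\bigl(g_2(x)-g_2(\epsilon)\bigr)dx$ handled by a second integration by parts against $\Phi_2$. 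Closing this chain of inequalities cleanly — and extracting strictness from the hypothesis that each $D_i$ is an honest effective divisor (so that $\bigl(D_i\cdot(-K_{X_i})^{n_i-1}\bigr)>0$, making $g_i$ strictly decreasing and each $\Phi_i$'s integrand not identically zero) — is where the care is needed; the rest is bookkeeping with the binomial factor $\binom{n}{n_1}$, which is harmless and positive.
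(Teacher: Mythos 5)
Your setup is correct and coincides with the paper's: $\epsilon(D,X)=\min\{\epsilon_1,\epsilon_2\}=:\epsilon$, the factorization $\vol_X(-K_X-xD)=\binom{n_1+n_2}{n_1}g_1(x)g_2(x)$ with $g_i(x)=\vol_{X_i}(-K_{X_i}-xD_i)$, and the reduction to showing $h:=g_1(0)g_2(0)+(\epsilon-1)g_1(\epsilon)g_2(\epsilon)-\int_0^\epsilon g_1g_2\,dx>0$. But the argument stops exactly at what you yourself call ``the heart of the argument'': you never produce the identity or chain of inequalities bounding $h$ from below by the factor functionals; you only describe the shape of an identity you ``would look for.'' As written this is a plan, not a proof. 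There is also a second, unaddressed gap: semistability of $X_i$ along $D_i$ gives $\xi_{\epsilon_i}(D_i)\ge 0$, i.e.\ nonnegativity of your $\Phi_i$ evaluated over $[0,\epsilon_i]$, whereas you need $\Phi_i\ge 0$ over the common interval $[0,\epsilon]$ with $\epsilon\le\epsilon_i$. This is true, but only because $c\mapsto\xi_c(D_i)$ is decreasing for $c>1$ (the derivative computation preceding Proposition~\ref{divslope}), so that $\xi_\epsilon(D_i)\ge\xi_{\epsilon_i}(D_i)=\xi(D_i)\ge 0$; the paper invokes this step explicitly and you do not. Finally, the digression about $h$ viewed as a function of $\epsilon$ being increasing on $[0,1]$ and decreasing beyond does not help: it only locates the minimum at an endpoint without controlling its value there.

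For the record, your plan does close, and when completed it is essentially the paper's telescoping argument. With $\Phi_i:=g_i(0)+(\epsilon-1)g_i(\epsilon)-\int_0^\epsilon g_i\,dx=\xi_\epsilon(D_i)$ one checks the exact identity
\[
h-g_2(\epsilon)\Phi_1-g_1(0)\Phi_2=\int_0^\epsilon\bigl(g_1(0)-g_1(x)\bigr)\bigl(g_2(x)-g_2(\epsilon)\bigr)dx>0,
\]
the strict positivity coming from the strict monotonicity of each $g_i$ on $(0,\epsilon)$ (each $D_i$ is a nonzero effective divisor and $-K_{X_i}-xD_i$ is ample there). Combined with $g_2(\epsilon)\ge0$, $g_1(0)>0$ and $\Phi_i\ge0$ as above, this yields $h>0$. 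The paper's proof performs the same telescoping but splits the integral at $x=1$ and uses $g_i(1)=\vol_{X_i}(-K_{X_i}-D_i)$ as the common constants, arriving at $\xi(D)>\binom{n_1+n_2}{n_1}\{g_1(1)\xi_\epsilon(D_2)+g_2(1)\xi_\epsilon(D_1)\}\ge0$; both variants work, but one of them has to be carried out.
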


\begin{proof}
Let $\epsilon_i:=\epsilon(D_i, X_i)$ for $i=1$, $2$ and let $\epsilon:=\epsilon(D, X)$. 
We can show that $\epsilon=\min\{\epsilon_1$, $\epsilon_2\}$. 
We can assume $\epsilon>1$ by Remark \ref{sesh}. 
We note that 
\[
\frac{d}{dx}\left(\vol_{X_i}\left(-K_{X_i}-xD_i\right)\right)
=-n_i\left(D_i\cdot\left(-K_{X_i}-xD_i\right)^{n_i-1}\right)<0\quad(0<x<\epsilon)
\]
for any $i=1$, $2$. By the definition of $\xi(D)$, we have 
\begin{eqnarray*}
\xi(D) & = & \int_0^1\left(\vol_X\left(-K_X\right)-\vol_X\left(-K_X-xD\right)\right)dx\\
 & - & \int_1^\epsilon\left(\vol_X\left(-K_X-xD\right)-\vol_X\left(-K_X-\epsilon D\right)\right)dx\\
& = & \binom{n_1+n_2}{n_1}\Bigl\{\int_0^1\bigl(\vol_{X_1}(-K_{X_1})\vol_{X_2}(-K_{X_2}\bigr)\\
 & - & \vol_{X_1}(-K_{X_1}-xD_1)\vol_{X_2}(-K_{X_2}-xD_2)\bigr)dx\\
 & - & \int_1^\epsilon\bigl(\vol_{X_1}(-K_{X_1}-xD_1)\vol_{X_2}(-K_{X_2}-xD_2)\\
 & - & \vol_{X_1}(-K_{X_1}-\epsilon D_1)\vol_{X_2}(-K_{X_2}-\epsilon D_2)\bigr)dx\Bigr\} \\
& = & \binom{n_1+n_2}{n_1}\Bigl\{\int_0^1\left(\vol_{X_1}\left(-K_{X_1}\right)
-\vol_{X_1}\left(-K_{X_1}-xD_1\right)\right)\vol_{X_2}\left(-K_{X_2}\right)dx\\
 & + & \int_0^1\vol_{X_1}\left(-K_{X_1}-xD_1\right)\left(\vol_{X_2}\left(-K_{X_2}\right)
-\vol_{X_2}\left(-K_{X_2}-xD_2\right)\right)dx\\
 & - & \int_1^\epsilon\left(\vol_{X_1}\left(-K_{X_1}-xD_1\right)-\vol_{X_1}\left(-K_{X_1}-\epsilon D_1\right)\right)\vol_{X_2}\left(-K_{X_2}-\epsilon D_2\right)dx\\
 & - & \int_1^\epsilon\vol_{X_1}\left(-K_{X_1}-xD_1\right)\left(\vol_{X_2}\left(-K_{X_2}-xD_2\right)-\vol_{X_2}\left(-K_{X_2}-\epsilon D_2\right)\right)dx\Bigr\}
\end{eqnarray*}
\begin{eqnarray*}
 & > & \binom{n_1+n_2}{n_1}\Bigl\{\int_0^1\left(\vol_{X_1}\left(-K_{X_1}\right)
-\vol_{X_1}\left(-K_{X_1}-xD_1\right)\right)\vol_{X_2}\left(-K_{X_2}-D_2\right)dx\\
 & + & \int_0^1\vol_{X_1}\left(-K_{X_1}-D_1\right)\left(\vol_{X_2}\left(-K_{X_2}\right)
-\vol_{X_2}\left(-K_{X_2}-xD_2\right)\right)dx\\
 & - & \int_1^\epsilon\left(\vol_{X_1}\left(-K_{X_1}-xD_1\right)
-\vol_{X_1}\left(-K_{X_1}-\epsilon D_1\right)\right)\vol_{X_2}\left(-K_{X_2}-D_2\right)dx\\
 & - & \int_1^\epsilon\vol_{X_1}\left(-K_{X_1}-D_1\right)\left(\vol_{X_2}\left(-K_{X_2}-xD_2\right)-\vol_{X_2}\left(-K_{X_2}-\epsilon D_2\right)\right)dx\Bigr\}    \\
 & = & \binom{n_1+n_2}{n_1}\Bigl\{\vol_{X_1}(-K_{X_1}-D_1)\cdot\xi_\epsilon(D_2)
+\vol_{X_2}(-K_{X_2}-D_2)\cdot\xi_\epsilon(D_1)\Bigr\}\\
 & \geq & \binom{n_1+n_2}{n_1}\Bigl\{\vol_{X_1}(-K_{X_1}-D_1)\cdot\xi(D_2)
+\vol_{X_2}(-K_{X_2}-D_2)\cdot\xi(D_1)\Bigr\}\geq 0.
\end{eqnarray*}
Therefore $X$ is slope stable along $D$. 
\end{proof}

As a consequence of Propositions \ref{prod1} and \ref{prod2}, 
we have the following result. 

\begin{corollary}\label{prod_cor}
Let $X$ be a Fano manifold which is the product of Fano manifolds $X=\prod_{i=1}^mX_i$. Then $X$ is slope stable $($resp.\ slope semistable$)$ 
along any divisor if and only if $X_i$ is slope stable $($resp.\ slope semistable$)$ 
along any divisor for any $1\leq i\leq m$. 
\end{corollary}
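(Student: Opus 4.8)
\textbf{Proof proposal for Corollary \ref{prod_cor}.}
The plan is to reduce the statement for an arbitrary $m$-fold product to the two-factor case by induction, and then to combine Propositions \ref{prod1} and \ref{prod2} with the structure of $\Eff(X)$ for a product. For the inductive step it suffices to treat $X=X_1\times X_2$, writing $X=X_1\times(X_2\times\cdots\times X_m)$ and applying the case $m=2$ together with the inductive hypothesis to the second factor; note that $X_2\times\cdots\times X_m$ is again a Fano manifold, so the induction is legitimate.

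So assume $X=X_1\times X_2$ with $X_i$ Fano. First I would prove the ``only if'' direction: if $X$ is slope stable (resp.\ semistable) along every divisor, then in particular it is so along $p_1^*D_1$ for every divisor $D_1\subset X_1$, and by Proposition \ref{prod1} this forces $X_1$ to be slope stable (resp.\ semistable) along every $D_1$; symmetrically for $X_2$. For the ``if'' direction, suppose each $X_i$ is slope stable (resp.\ semistable) along every divisor, and let $D\subset X$ be an arbitrary divisor. The key geometric input is that $\Eff(X)=\Eff(X_1)+\Eff(X_2)$ under the decomposition $\ND(X)=\ND(X_1)\oplus\ND(X_2)$ recorded in Section \ref{prod_section}: hence the class of $D$ is $p_1^*\delta_1+p_2^*\delta_2$ with $\delta_i\in\Eff(X_i)$. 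Since slope (semi)stability along a divisor depends only on the numerical class of the divisor (everything in $\xi(D)$ is an intersection-theoretic quantity, and $\epsilon(D)$ depends only on the class), I may assume $D=p_1^*D_1+p_2^*D_2$ with $D_i$ an effective divisor on $X_i$, allowing $D_i=0$.

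Now three cases. If both $D_1$ and $D_2$ are nonzero effective divisors, then $X_i$ is slope semistable along $D_i$ by hypothesis (stability implies semistability), so Proposition \ref{prod2} gives that $X$ is slope stable — in particular semistable — along $D$, which is what is needed in both the ``stable'' and the ``semistable'' conclusion. If exactly one $D_i$ is zero, say $D_2=0$, then $D=p_1^*D_1$ and Proposition \ref{prod1} transfers slope stability (resp.\ semistability) of $X_1$ along $D_1$ directly to $X$ along $D$. The case $D=0$ does not occur since $D$ is a (nonzero effective) divisor. This exhausts all cases and completes the two-factor statement, hence the corollary by the induction above.

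The only real subtlety — and the step I would be most careful about — is the claim that slope (semi)stability along a divisor is a numerical invariant, so that one may replace an arbitrary effective $D$ by a sum $p_1^*D_1+p_2^*D_2$ of pullbacks; this is what lets Proposition \ref{prod2} (which is stated for such sums) apply. This follows because $\xi(D)$ as written in Proposition \ref{divslope} is a polynomial/integral expression in intersection numbers of $-K_X$ and $D$, and the Seshadri constant $\epsilon(D)$ likewise depends only on $[D]\in\ND(X)$ via the nef cone; one should remark this explicitly. Everything else is a bookkeeping combination of the two propositions already proved.
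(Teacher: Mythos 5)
Your argument is correct and is essentially the proof the paper intends: the paper states Corollary \ref{prod_cor} as an immediate consequence of Propositions \ref{prod1} and \ref{prod2}, and your write-up simply makes explicit the steps left to the reader (induction on the number of factors, the decomposition of an effective divisor class as $p_1^*D_1+p_2^*D_2$ via $\Eff(X)=\Eff(X_1)+\Eff(X_2)$, the case split according to whether a $D_i$ vanishes, and the fact that by Proposition \ref{divslope} the criterion $\xi(D)>0$ depends only on the numerical class of $D$). The one point you rightly flag --- replacing $D$ by a numerically equivalent sum of pullbacks of genuine effective divisors --- is indeed the only subtlety, and it is handled correctly.
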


\subsection{Length of extremal rays}

We show that if a Fano manifold $X$ is not slope stable along a divisor, 
then there exists an extremal ray of the length $\geq 2$.

\begin{proposition}\label{ray}
Let $X$ be a Fano manifold and $D\subset X$ ba a divisor. 
Assume $X$ is not slope stable along $D$. Then for any irreducible curve $C\subset X$, 
we have $(-K_X\cdot C)>(D\cdot C)$. In particular, there exists an extremal ray $R\subset\NE(X)$ such that $l(R)\geq 2$.
\end{proposition}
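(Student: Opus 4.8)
The plan is to deduce the curve inequality directly from the Seshadri-constant estimate recorded in Remark \ref{sesh}, and then to feed the resulting numerical positivity into the cone theorem. First I would recall that since $X$ is not slope stable along $D$, Remark \ref{sesh} (citing \cite[Proposition 4.4]{OS}) gives $\epsilon(D)\geq (n+1)/n>1$; in particular $\sigma^*(-K_X)-xD=-K_X-xD$ is nef for every $x\in[0,1]$, and in fact ample for $x\in[0,1)$ once $\epsilon(D)>1$. Evaluating nefness of $-K_X-D$ against an arbitrary irreducible curve $C\subset X$ yields $(-K_X\cdot C)\geq (D\cdot C)$, and ampleness of $-K_X-xD$ for $x$ slightly bigger than $1$ (which is legitimate because $\epsilon(D)>1$) upgrades this to the strict inequality $(-K_X\cdot C)>(D\cdot C)$. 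This is the first assertion.

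For the second assertion I would pass to extremal rays via the cone theorem (\cite{KoMo}): write $\overline{\NE}(X)$ as the closure of the cone spanned by finitely many $K_X$-negative extremal rays $R_1,\dots,R_k$ together with the part where $-K_X$ is non-positive, which for a Fano manifold is empty. So $\overline{\NE}(X)=\sum_j R_j$ with each $R_j$ a $K_X$-negative extremal ray, each generated by a class of a rational curve $C_j$ realizing the length $l(R_j)=(-K_X\cdot C_j)$. Now $D$ is a nonzero effective divisor, hence $D\cdot C>0$ for at least one irreducible curve $C$, equivalently $(D\cdot \gamma)>0$ for some $\gamma\in\overline{\NE}(X)$; decomposing $\gamma$ into extremal contributions, there must be an index $j$ with $(D\cdot C_j)>0$, i.e. $(D\cdot C_j)\geq 1$ since $D$ is Cartier and $C_j$ is a curve. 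Combining with the first assertion applied to the minimal rational curve $C_j$ gives $l(R_j)=(-K_X\cdot C_j)>(D\cdot C_j)\geq 1$, hence $l(R_j)\geq 2$ because $(-K_X\cdot C_j)\in\Z$.

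The one point demanding a little care — and the main obstacle — is producing a curve on which $D$ has positive degree while simultaneously keeping it inside a single extremal ray: an arbitrary irreducible $C$ with $(D\cdot C)>0$ need not span an extremal ray. The clean way around this is exactly the cone-decomposition argument above: since $D$ is effective and nonzero it has positive intersection with some curve class, and because $\overline{\NE}(X)$ is the finite sum $\sum_j \R_{\geq 0}[C_j]$ of the extremal rays, positivity of $(D\cdot -)$ somewhere on the cone forces $(D\cdot C_j)>0$ for some $j$; one then only needs the first assertion, which is purely a nefness statement and applies to \emph{every} irreducible curve, including $C_j$. No convexity or volume computation beyond Remark \ref{sesh} is needed, so the proof is short.
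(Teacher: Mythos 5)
Your proof is correct and follows essentially the same route as the paper's: Remark \ref{sesh} gives $\epsilon(D)>1$, from which the strict inequality $(-K_X\cdot C)>(D\cdot C)$ follows for every irreducible curve (the paper writes this as $(D\cdot C)\leq(-K_X/\epsilon(D)\cdot C)<(-K_X\cdot C)$, which is the same nefness computation you perform), and the extremal ray with $l(R)\geq 2$ is produced exactly as you do, by choosing a ray on which the effective divisor $D$ has positive degree and applying the first assertion to its minimal rational curve.
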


\begin{proof}
We have $\epsilon(D)>1$ by Remark \ref{sesh}. Hence we have 
\[
(D\cdot C)\leq(-K_X/\epsilon(D)\cdot C)<(-K_X\cdot C).
\]
Since $D$ is an effective divisor, there exists an extremal ray $R\subset\NE(X)$ with 
a minimal rational curve $[C]\in R$ such that $(D\cdot C)>0$. 
Therefore we have $(-K_X\cdot C)\geq 2$ since $(-K_X\cdot C)>(D\cdot C)$ holds.
\end{proof}

\subsection{Slope stability of Fano manifolds along nef divisors}

\begin{thm}\label{ample}
Let $X$ be a Fano $n$-fold and $D\subset X$ be a divisor. 
\begin{enumerate}
\renewcommand{\theenumi}{\arabic{enumi}}
\renewcommand{\labelenumi}{$(\theenumi)$}
\item\label{ample1}
If $D$ is an ample divisor, then $X$ is slope stable along $D$ 
unless $X$ is isomorphic to a projective space and $D$ is a hyperplane section.
\item\label{ample2}
If $D$ is a nef divisor and $(D^i\cdot (-K_X-\epsilon(D)D)^{n-i})=0$ 
for any $1\leq i\leq \epsilon(D)-1$ $($resp.\ $1\leq i<\epsilon(D)-1$$)$, 
then $X$ is slope stable $($resp.\ slope semistable$)$ along $D$. 
\end{enumerate}
\end{thm}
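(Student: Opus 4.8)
The plan is to use the criterion of Proposition \ref{divslope}, so both parts amount to showing $\xi(D)\geq 0$ (with strictness) under the stated hypotheses. The key observation in both cases is that the volume function $f(x):=\vol_X(-K_X-xD)$ can be expanded by the binomial formula as $f(x)=\sum_{i=0}^n \binom{n}{i}(-x)^i\bigl((-K_X)^{n-i}\cdot D^i\bigr)$, and that when $D$ is nef every coefficient $\bigl((-K_X)^{n-i}\cdot D^i\bigr)$ is nonnegative; moreover for $0<x<\epsilon(D)$ the divisor $-K_X-xD$ is ample so $f$ is strictly decreasing there. First I would write $n\xi(D)=\int_0^{\epsilon(D)}(1-x)\bigl(D\cdot(-K_X-xD)^{n-1}\bigr)dx$ using the last displayed form in Proposition \ref{divslope}, expand $\bigl(D\cdot(-K_X-xD)^{n-1}\bigr)$ as a polynomial in $x$, and observe that under the vanishing hypotheses of \eqref{ample2} all but the leading term of $(-K_X-\epsilon(D)D)^{n-i}$ contributions drop out; concretely the hypothesis $\bigl(D^i\cdot(-K_X-\epsilon(D)D)^{n-i}\bigr)=0$ for the relevant range of $i$ forces $-K_X-\epsilon(D)D$ to behave like a pullback, so the integrand becomes a simple explicit polynomial whose integral over $[0,\epsilon(D)]$ one can evaluate and check is positive (resp. nonnegative) once $\epsilon(D)\geq 1$, which holds by Remark \ref{sesh} when $X$ is not already slope stable.

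For \eqref{ample1}, when $D$ is ample the Seshadri constant $\epsilon(D)$ is automatically positive, and I would argue as follows. Suppose $X$ is not slope stable along $D$; then $\epsilon(D)>1$ by Remark \ref{sesh}, and in fact $\epsilon(D)\geq (n+1)/n$ by the sharper statement attributed to Odaka in Remark \ref{sesh}. Since $D$ is ample, $-K_X-xD$ is ample for $0\le x<\epsilon(D)$ and becomes non-ample (merely nef, hence on the boundary) at $x=\epsilon(D)$; I would then run the convexity machinery of Proposition \ref{convex}. The natural route is to show that under the ampleness of $D$ the only way $\xi(D)\le 0$ can happen is the extreme case where $-K_X-\epsilon(D)D$ is numerically trivial, i.e. $-K_X\equiv \epsilon(D)D$ with $\epsilon(D)$ rational; this says $D$ is proportional to $-K_X$ in $\ND(X)$, so $\rho_X=1$ and $-K_X=r_XL$ with $L$ the ample generator, $D\sim aL$. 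Plugging $-K_X-xD=(r_X-ax)L$ into $n\xi(D)=\int_0^{r_X/a}(1-x)\,a(r_X-ax)^{n-1}(L^n)\,dx$ and evaluating the elementary integral, one finds $\xi(D)$ is a positive multiple of $(L^n)$ times a polynomial in $r_X/a$ that is $\le 0$ precisely when $r_X/a\geq$ some threshold; matching this with the Kobayashi–Ochiai bound $r_X\le n+1$, equality forcing $X\cong\pr^n$, and $a=1$ forcing $D$ a hyperplane, pins down the exceptional case. The computation shows that for $\pr^n$ with $D$ a hyperplane we get $\xi(D)=0$ exactly (semistable but not stable), and everything else is strictly positive.

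The main obstacle I expect is the reduction, in part \eqref{ample1}, from "$D$ ample and $\xi(D)\le0$" to "$-K_X-\epsilon(D)D\equiv 0$ and $\rho_X=1$." A priori $D$ ample only gives $\epsilon(D)>1$; one must rule out all intermediate behaviour where $-K_X-\epsilon(D)D$ is nef, nonzero, and not proportional to anything convenient. I would handle this by estimating: write $g(x):=\bigl(D\cdot(-K_X-xD)^{n-1}\bigr)$, note $g$ is strictly positive and strictly decreasing on $[0,\epsilon(D)]$ (strictly decreasing because $D$ is ample, so $\bigl(D^2\cdot(-K_X-xD)^{n-2}\bigr)>0$), and compare $n\xi(D)=\int_0^{\epsilon(D)}(1-x)g(x)dx$ with the value obtained by replacing $g$ by the affine function agreeing with it at $x=0$ and $x=\epsilon(D)$; strict convexity of $f$ (concavity of $-f$) gives $g$ concave, hence $g$ lies above its chord, and a short lemma then bounds $\xi(D)$ below by a multiple of $\bigl(D\cdot(-K_X)^{n-1}\bigr)\bigl(\epsilon(D)-1\bigr)\bigl(2-\epsilon(D)\bigr)$ plus a boundary term involving $g(\epsilon(D))=\bigl(D\cdot(-K_X-\epsilon(D)D)^{n-1}\bigr)$; one then splits into the cases $\epsilon(D)\le 2$ (where Proposition \ref{convex}(3) applies directly since $D^2\ne0$ as $D$ is ample) and $\epsilon(D)>2$ (where the boundary divisor $-K_X-\epsilon(D)D$ must be very degenerate), isolating $\pr^n$. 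Part \eqref{ample2} is comparatively routine once the vanishing hypotheses are fed into the binomial expansion, so I would present it first as a warm-up and then deduce \eqref{ample1} by showing the ample case either falls under \eqref{ample2} or is the projective-space exception.
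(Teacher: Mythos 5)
Your treatment of the numerically proportional case (Kobayashi--Ochiai, the elementary integral $\xi(D)=\vol_X(D)t^n(1-t/(n+1))$, equality only for $\pr^n$ and a hyperplane) matches the paper's first step, and the spirit of part \eqref{ample2} --- expand $\xi(D)$ and observe that the vanishing hypotheses kill exactly the terms with possibly negative coefficients --- is also what the paper does. But the core of part \eqref{ample1} is the reduction from ``$D$ ample and $\xi(D)\le 0$'' to ``$-K_X\equiv\epsilon(D)D$'', and your argument for it has a genuine gap. First, a sign error: for $D$ nef one has $g''(x)=(n-1)(n-2)\bigl(D^3\cdot(-K_X-xD)^{n-3}\bigr)\ge 0$, so $g(x)=\bigl(D\cdot(-K_X-xD)^{n-1}\bigr)$ is \emph{convex}, not concave, and lies \emph{below} its chord. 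Second, even with the correct convexity, the chord comparison does not produce a one-sided bound on $\int_0^{\epsilon}(1-x)g(x)\,dx$, because the weight $(1-x)$ changes sign at $x=1$: the chord overestimates the (positive-weight) contribution on $(0,1)$ and underestimates the (negative-weight) contribution on $(1,\epsilon)$, so the two errors point in opposite directions. Your fallback for $\epsilon(D)>2$ (``the boundary divisor must be very degenerate'') is an assertion, not an argument --- e.g.\ $\Q^5$ with $D$ a hyperplane section has $\epsilon(D)=5>2$ with nothing degenerate happening --- and the minor claim that proportionality of $D$ and $-K_X$ forces $\rho_X=1$ is false (though harmless, since only proportionality is used afterwards).

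The missing idea is the paper's slicing device. Let $P\subset\ND(X)$ be the plane spanned by $[-K_X]$ and $[D]$, write $P\cap\Nef(X)=\R_{\ge0}[H_1]+\R_{\ge0}[H_2]$, and express $-K_X\equiv p_1H_1+p_2H_2$, $D\equiv q_1H_1+q_2H_2$ with $1/\epsilon(D)=q_1/p_1>q_2/p_2$ and $q_2>0$ exactly when $D$ is ample. The crucial quantitative input, which your sketch never invokes, is the length bound $(-K_X\cdot C_i)\le n$ for minimal rational curves $C_i$ of the extremal rays supported on $H_i$ (Cho--Miyaoka--Shepherd-Barron), which gives $p_i/q_i\le n$ whenever $q_i>0$; in particular $\epsilon(D)=p_1/q_1\le n<n+1$. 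Expanding $\xi(D)=\sum_i M_i\,(H_1^i\cdot H_2^{n-i})$ and using these bounds, every coefficient $M_i$ is strictly positive when $D$ is ample (e.g.\ $M_n=p_1^n(1-\tfrac{1}{n+1}\tfrac{p_1}{q_1})>0$), while $(H_1^i\cdot H_2^{n-i})\ge 0$ with at least one strict, so $\xi(D)>0$. This is what rules out all the ``intermediate behaviour'' you identify as the main obstacle; without it, or some substitute of comparable strength, the proof of \eqref{ample1} is incomplete. Part \eqref{ample2} then falls out of the same expansion with $q_2=0$, where $M_i=\binom{n}{i}p_1^ip_2^{n-i}\bigl(1-\tfrac{\epsilon(D)}{i+1}\bigr)$ is negative precisely for $1\le i\le\epsilon(D)-1$, which is exactly the range your hypotheses annihilate.
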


\begin{proof}
First, we consider the case that $-K_X$ and $D$ are numerically proportional 
(i.e., there exists a positive rational number $t$ such that $-K_X\equiv tD$). 
We note that $t\leq n+1$ and the equality holds if and only if $X\simeq\pr^n$ and $D\in|\sO_{\pr^n}(1)|$ by \cite{KO}. 
In this case we have 
\begin{eqnarray*}
\xi(D)=\vol_X(D)\Bigl\{t^n-\int_0^t(t-x)^n dx\Bigr\}=\vol_X(D)t^n\Bigl(1-\frac{t}{n+1}\Bigr)\geq 0, 
\end{eqnarray*}
and equality holds if and only if $t=n+1$. Therefore we have proved the theorem for the case $-K_X$ and $D$ are numerically proportional. 

Now we consider the case that $-K_X$ and $D$ are not numerically proportional. We can assume $\epsilon(D)>1$ by Remark \ref{sesh}. 
Let $P\subset\ND(X)$ be the $2$-dimensional vector subspace 
spanned by $[-K_X]$ and $[D]$ (the classes of $-K_X$ and $D$ in $\ND(X)$). 
We take $[H_1],[H_2]\in P\cap\Nef(X)$ such that $P\cap\Nef(X)=\R_{\geq 0}[H_1]+\R_{\geq 0}[H_2]$. 
Then after interchanging $H_1$ and $H_2$, if necessary, we can write $-K_X\equiv p_1 H_1+p_2 H_2$ and $D\equiv q_1 H_1+q_2 H_2$, 
where $p_1$, $p_2$, $q_1 >0$, $q_2\geq 0$ 
and $1/{\epsilon(D)}=q_1/p_1>q_2/p_2$ holds. We note that $D$ is ample if and only if $q_2>0$. 
We also note that there exists extremal rays $R_1, R_2\subset\NE(X)$ such that $(H_i\cdot R_j)=0$ if and only if $i\neq j$ holds 
where $1\leq i$, $j\leq 2$, since the class of $-K_X$ lives in the interior of $\Nef(X)$. 
We choose minimal rational curves $C_1$ and $C_2$ of $R_1$ and $R_2$, respectively. 
We have $(-K_X\cdot C_i)\leq n$ for any $i=1$, $2$ by \cite{CMSB}. 
If $q_i>0$ then we have $p_i/q_i\leq n$ since $(-K_X\cdot C_i)\leq n$, 
$(D\cdot C_i)\geq 1$ and $p_i/q_i=(-K_X\cdot C_i)/(D\cdot C_i)$ holds. 
Then we can show that 

\begin{eqnarray*}
\xi(D) & = & \vol_X(p_1H_1+p_2H_2)+\Bigl(\frac{p_1}{q_1}-1\Bigr)
\vol_X\Bigl(\bigl(p_2-q_2\frac{p_1}{q_1}\bigr)H_2\Bigr)\\
& - & \int_0^{\frac{p_1}{q_1}}\vol_X\left(\left(p_1-q_1x\right)H_1+\left(p_2-q_2x\right)H_2\right)dx\\
& = & (H_2^n)p_2^n\Bigl\{1+\Bigl(\frac{p_1}{q_1}-1\Bigr)\Bigr(1-\frac{q_2}{p_2}\frac{p_1}{q_1}\Bigr)^n-\int_0^{\frac{p_1}{q_1}}\Bigl(1-\frac{q_2}{p_2}x\Bigr)^n dx\Bigr\}\\
& + & \sum_{i=1}^{n-1}(H_1^i\cdot H_2^{n-i})p_1^ip_2^{n-i}\binom{n}{i}\Bigl\{1-\int_0^{\frac{p_1}{q_1}}\Bigl(1-\frac{q_1}{p_1}x\Bigr)^i\Bigl(1-\frac{q_2}{p_2}x\Bigr)^{n-i}dx\Bigr\}\\
& + & (H_1^n)p_1^n\Bigl\{1-\int_0^{\frac{p_1}{q_1}}\Bigl(1-\frac{q_1}{p_1}x\Bigr)^n dx\Bigr\}. 
\end{eqnarray*}

We denote the coefficient of $(H_1^i\cdot H_2^{n-i})$ by $M_i$. 
We claim that $(H_1^i\cdot H_2^{n-i})\geq 0$ for any $0\leq i\leq n$ and $(H_1^i\cdot H_2^{n-i})>0$ for some $i$ since $H_1$ and $H_2$ are nef and $[-K_X]\in P$. 

First, we consider the case $D$ is ample. It is enough to show $M_i>0$ for any $i$ by the above claim. 
We have 
\begin{eqnarray*}
M_0 &=& p_2^n\Bigl\{1-\frac{1}{n+1}\frac{p_2}{q_2}+\Bigl(1-\frac{q_2}{p_2}\frac{p_1}{q_1}\Bigr)^n\Bigl(\frac{p_1}{q_1}-1+\frac{1}{n+1}\bigl(\frac{p_2}{q_2}-\frac{p_1}{q_1}\bigr)\Bigr)\Bigr\}>0,\\
M_n &=& p_1^n\Bigl(1-\frac{1}{n+1}\frac{p_1}{q_1}\Bigr)>0,\\
M_i &>& \binom{n}{i}p_1^ip_2^{n-i}\Bigl\{1-\int_0^{\frac{p_1}{q_1}}\bigl(1-\frac{p_2}{q_2}x\bigr)^n dx\Bigr\}\\
&=&\binom{n}{i}p_1^ip_2^{n-i}\frac{p_2}{q_2(n+1)}\Bigl\{(n+1)\frac{q_2}{p_2}-1+\Bigl(1-\frac{q_2}{p_2}\frac{p_1}{q_1}\Bigr)^{n+1}\Bigr\}>0\quad (0<i<n).
\end{eqnarray*}
Thus we have proved the theorem for the case $D$ is ample. 

Now, we consider the case that $D$ is not ample. Since $q_2=0$, we have 
\[
\xi(D)=\sum_{i=1}^n(H_1^i\cdot H_2^{n-i})\binom{n}{i}p_1^ip_2^{n-i}\Bigl(1-\frac{1}{i+1}\frac{p_1}{q_1}\Bigr).
\]
Therefore we have $\xi(D)>0$ (resp.\ $\geq 0$) if $(H_1^i\cdot H_2^{n-i})=0$ 
for any $1\leq i\leq p_1/q_1-1$ (resp.\ $1\leq i<p_1/q_1-1$) 
by the same argument of the case $D$ is ample. 
\end{proof}

As an immediate corollary of Theorem \ref{ample}, we get Odaka's result: 

\begin{corollary}[Odaka]\label{ODK}
Let $X$ be a Fano manifold with the Picard number $\rho_X=1$ and let 
$D\subset X$ be a divisor. 
Then $X$ is slope stable along $D$ unless $X$ is isomorphic to a 
projective space and $D$ is a hyperplane section. 
\end{corollary}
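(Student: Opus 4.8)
The plan is to observe that the hypothesis $\rho_X=1$ puts us directly into the first case treated in the proof of Theorem \ref{ample}, namely the case where $-K_X$ and $D$ are numerically proportional. Indeed, when $\rho_X=1$ the group $\Pic(X)$, which embeds into $\ND(X)$, is generated by a single ample class $L$, so $-K_X\sim aL$ and $D\sim bL$ with $a,b\in\Z_{>0}$; hence $-K_X\equiv tD$ with $t:=a/b\in\Q_{>0}$. (In particular $D$ is ample, although only the proportionality is used.)

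Next I would invoke, verbatim, the two ingredients already recorded in the proof of Theorem \ref{ample}. First, the Kobayashi--Ochiai bound \cite{KO}: $t\le n+1$, with equality if and only if $X\simeq\pr^n$ and $D\in|\sO_{\pr^n}(1)|$. Second, the evaluation
\[
\xi(D)=\vol_X(D)\,t^n\Bigl(1-\frac{t}{n+1}\Bigr),
\]
obtained from the definition of $\xi$ using $\vol_X(-K_X-xD)=(t-x)^n\vol_X(D)$ along the proportional direction together with $\epsilon(D)=t$. Since $\vol_X(D)>0$ and $t>0$, this quantity is strictly positive when $t<n+1$ and vanishes exactly when $t=n+1$.

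I would then conclude via Proposition \ref{divslope}: $X$ is slope stable along $D$ if and only if $\xi(D)>0$, which by the above holds if and only if $t<n+1$, i.e.\ if and only if $(X,D)$ is not $(\pr^n,\text{hyperplane section})$. (When $t=n+1$ one still has $\xi(D)=0$, so $X$ is slope semistable but not slope stable along the hyperplane, in agreement with the statement.) There is essentially no obstacle here: the corollary is immediate once the proportionality is observed. The only points worth a sentence of care are that a divisor $D\subset X$ is taken to be effective and nonzero --- which guarantees $t>0$ and $\vol_X(D)>0$ --- and that $t$ is rational, so that the hypotheses of Proposition \ref{divslope} (in particular $\epsilon(D)\in\Q_{>0}$) are automatic.
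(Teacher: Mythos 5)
Your proof is correct and follows essentially the same route as the paper, which simply notes that $\rho_X=1$ forces $D$ to be ample and then cites Theorem \ref{ample} \eqref{ample1}. The only (harmless) difference is that you unwind the relevant subcase of that theorem's proof --- the numerically proportional case, with the Kobayashi--Ochiai bound and the computation $\xi(D)=\vol_X(D)\,t^n\bigl(1-\tfrac{t}{n+1}\bigr)$ --- rather than invoking the statement as a black box.
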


\begin{proof}
The divisor $D$ is ample since $\rho_X=1$. Hence the assertion is obvious from 
Theorem \ref{ample} \eqref{ample1}. 
\end{proof}

\begin{remark}\label{nef_destabilize}
There exists a Fano $n$-fold $X$ and a nef effective divisor $D\subset X$ such that $X$ is not slope semistable along $D$. 
For example, let $X$ be the Fano manifold obtained by the blowing up of 
the $n$-dimensional projective space along a (reduced) point 
and $D$ be the strict transform of a hyperplane passing through the center of the blowing up. 
Then $D$ is a nef divisor and 
\[
\vol_X(-K_X-xD)=(n+1-x)^n-(n-1-x)^n
\]
holds. 
Hence we have 
\[
\xi(D)=\frac{2(n-1)}{n+1}\{n\cdot2^{n-1}-(n-1)^{n-1}\},
\]
which takes a negative value if $n\geq 5$. 
\end{remark}

\section{Examples and applications}

\subsection{Projective spaces}\label{pn}

Let $Z\subset\pr^n$ be a linear subspace of codimension $r\geq 1$. 

If $r=1$, then 
\[
\xi(Z)=(n+1)^n-\int_0^{n+1}(n+1-x)^ndx=0
\]
(see also the proof of Theorem \ref{ample}).

We consider the case $r\geq 2$. Let the blowing up of $\pr^n$ along $Z$ be 
$\sigma\colon\hat{X}\rightarrow\pr^n$ and the exceptional divisor be $E$. 
We can show that $\epsilon(Z)=n+1$, 
$E\simeq\pr^{n-r}\times\pr^{r-1}$, 
$\sN_{E/\hat{X}}\simeq\sO_{\pr^{n-r}\times\pr^{r-1}}(1, -1)$ and 
$\sO_{\hat{X}}(-K_{\hat{X}})|_E\simeq\sO_{\pr^{n-r}\times\pr^{r-1}}(n-r+2, r-1)$. 
Hence 
\begin{eqnarray*}
\xi(Z) & = & n\int_0^{n+1}(r-x)\vol_{\pr^{n-r}\times\pr^{r-1}}(\sO_{\pr^{n-r}\times\pr^{r-1}}(n+1-x, x))dx\\
 & = & n\binom{n-1}{r-1}\int_0^{n+1}(r-x)(n+1-x)^{n-r}x^{r-1}dx=0
\end{eqnarray*}
by a simple calculation. Therefore, we have the following: 

\begin{proposition}\label{proj}
The projective space $\pr^n$ is not slope stable 
but slope semistable along any linear subspace. 
\end{proposition}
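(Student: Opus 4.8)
The plan is to apply Proposition \ref{divslope}: $\pr^n$ is slope stable (resp.\ slope semistable) along $Z$ exactly when $\xi(Z)>0$ (resp.\ $\ge 0$), so it suffices to prove $\xi(Z)=0$ for every linear subspace $Z\subset\pr^n$, whence $\pr^n$ is slope semistable but not slope stable along $Z$. I would split into the divisorial case $r=1$ and the case $r\ge 2$. For $r=1$ one has $\epsilon(Z)=n+1$ and $\vol_{\pr^n}(-K_{\pr^n}-xZ)=(n+1-x)^n$, so $\xi(Z)=(n+1)^n-\int_0^{n+1}(n+1-x)^n\,dx=0$ by a one-line antiderivative. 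The substance is the case $r\ge 2$, and it all rests on an explicit description of $\sigma\colon\hat X=\Bl_Z\pr^n\to\pr^n$ and of $E$.

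For this I would use the linear projection $\pr^n\dashrightarrow\pr^{r-1}$ from $Z$, which $\sigma$ resolves to a morphism $\hat\pi\colon\hat X\to\pr^{r-1}$ whose fibres are the strict transforms of the $(n-r+1)$-planes spanned by $Z$ and a point; since each such plane contains $Z$ as a hyperplane, its strict transform is untouched by $\sigma$, so $\hat\pi^*\sO_{\pr^{r-1}}(1)=\sigma^*\sO_{\pr^n}(1)-E$ and in particular $\sigma^*\sO(1)-E$ is nef. From $\sN_{Z/\pr^n}\simeq\sO_Z(1)^{\oplus r}$ one gets $E=\pr(\sN_{Z/\pr^n})\simeq\pr^{n-r}\times\pr^{r-1}$ with $\sigma|_E$ the first projection, so $\sigma^*\sO(1)|_E\simeq\sO(1,0)$; restricting $\hat\pi^*\sO(1)$ to $E$ gives $\sO(0,1)$, hence $\sO_{\hat X}(-E)|_E\simeq\sO(-1,1)$, i.e.\ $\sN_{E/\hat X}\simeq\sO(1,-1)$. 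Adjunction $-K_{\hat X}=\sigma^*(-K_{\pr^n})-(r-1)E$ then yields $\sO_{\hat X}(-K_{\hat X})|_E\simeq\sO(n-r+2,r-1)$, and writing $\sigma^*\sO(n+1)-cE=(n+1-c)\sigma^*\sO(1)+c(\sigma^*\sO(1)-E)$ (nef for $c\le n+1$) together with the strict transform of a line meeting $Z$ (on which it has degree $n+1-c$) gives $\epsilon(Z)=n+1$.

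Feeding these into the last formula of Proposition \ref{divslope}, and using $(\sigma^*\sO(n+1)-xE)|_E\simeq\sO(n+1-x,x)$ and $(\sO(a,b))^{n-1}=\binom{n-1}{r-1}a^{n-r}b^{r-1}$ on $\pr^{n-r}\times\pr^{r-1}$, gives
\[
\xi(Z)=n\int_0^{n+1}(r-x)\bigl(E\cdot(\sigma^*(-K_{\pr^n})-xE)^{n-1}\bigr)\,dx=n\binom{n-1}{r-1}\int_0^{n+1}(r-x)(n+1-x)^{n-r}x^{r-1}\,dx.
\]
The integrand is a constant multiple of $\frac{d}{dx}\bigl(x^r(n+1-x)^{n-r+1}\bigr)$, which vanishes at both $x=0$ and $x=n+1$, so the integral is $0$ and $\xi(Z)=0$; Proposition \ref{divslope} then gives the statement. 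The only step I expect to require care is the geometric identification of $\hat X$, $E$, $\sN_{E/\hat X}$ and $\sO_{\hat X}(-K_{\hat X})|_E$ — all standard facts about blowing up a linear subspace of $\pr^n$ — after which the volume function and the defining integral are completely explicit and the vanishing is immediate.
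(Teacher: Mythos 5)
Your proposal is correct and follows essentially the same route as the paper: reduce to the sign of $\xi(Z)$ via Proposition \ref{divslope}, compute $\epsilon(Z)=n+1$, $E\simeq\pr^{n-r}\times\pr^{r-1}$, $\sN_{E/\hat X}\simeq\sO(1,-1)$ and $\sO_{\hat X}(-K_{\hat X})|_E\simeq\sO(n-r+2,r-1)$, and evaluate the same integral $n\binom{n-1}{r-1}\int_0^{n+1}(r-x)(n+1-x)^{n-r}x^{r-1}\,dx=0$. You simply supply more detail than the paper does on the geometric identifications and on why the integral vanishes (the integrand being $\tfrac{1}{n+1}\tfrac{d}{dx}\bigl(x^r(n+1-x)^{n-r+1}\bigr)$), all of which checks out.
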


In fact, it is well known that 
the $n$-dimensional projective space admits a K\"ahler-Einstein metrics; 
the Fubini-Study metric.

\subsection{Surfaces}\label{surf}

In Section \ref{surf}, we consider the case such that the dimension is equal to two. 

\begin{proposition}\label{dp}
Let $S$ be a del Pezzo surface, that is, $S$ is a Fano manifold with $\dim S=2$. 
\begin{enumerate}
\renewcommand{\theenumi}{\arabic{enumi}}
\renewcommand{\labelenumi}{$(\theenumi)$}
\item\label{dp1}
$S$ is slope semistable along any curve but 
there exists a curve $C\subset S$ such that $S$ is not slope stable along $C$ 
if and only if $S$ is isomorphic to either $\pr^2$ or $\pr^1\times\pr^1$.
\item\label{dp2}
There exists a curve $C\subset S$ such that $S$ is not slope semistable along $C$ 
if and only if $S$ is isomorphic to $\F_1$. 
\end{enumerate}
\end{proposition}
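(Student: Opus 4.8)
The plan is to run everything through the numerical criterion of Proposition~\ref{divslope}, using Remark~\ref{sesh} to force $\epsilon(C)>1$ and Proposition~\ref{convex}~\eqref{convex3} to cut down to a short list of curve classes. Throughout, ``curve'' means an irreducible reduced subvariety of dimension~$1$; for such a $C\subset S$ I write $d:=K_S^2=\vol_S(-K_S)$, $a:=(-K_S\cdot C)$, $b:=(C^2)$, $\epsilon:=\epsilon(C)$. Since $-K_S-xC$ is nef on $[0,\epsilon]$ one has $\vol_S(-K_S-xC)=d-2ax+bx^2$ there, so Proposition~\ref{divslope} yields the closed form
\[
\xi(C)=2a\epsilon-(a+b)\epsilon^{2}+\tfrac{2}{3}b\,\epsilon^{3};
\]
in particular $\xi(C)=2\epsilon-\tfrac23\epsilon^{3}$ when $C$ is a $(-1)$-curve, and $\xi(C)=2\epsilon(2-\epsilon)$ when $(C^2)=0$ and $(-K_S\cdot C)=2$.

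For the ``if'' parts of (1) and (2) I would argue directly. On $\pr^2$ a line $\ell$ has $(a,b,\epsilon)=(3,1,3)$, so $\xi(\ell)=0$ (cf.\ Proposition~\ref{proj}), while every curve of degree $\geq 2$ has $\epsilon=3/\deg\leq 3/2$ and $(C^2)>0$, hence is slope stable by Proposition~\ref{convex}~\eqref{convex3}; thus $\pr^2$ is slope semistable along every curve and not slope stable along $\ell$. By Propositions~\ref{proj} and~\ref{prod1}, $\pr^1\times\pr^1$ is slope semistable but not slope stable along a ruling, and by Corollary~\ref{prod_cor} it is slope semistable along every curve. For $\F_1$, the $(-1)$-curve $e$ has $\epsilon(e)=2$, whence $\xi(e)=-\tfrac43<0$ and $\F_1$ is not slope semistable along $e$.

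For the converse, suppose $S$ is not slope stable along a curve $C$. Then $\epsilon>1$ (Remark~\ref{sesh}), so $-K_S-C$ is nef, and by Proposition~\ref{convex}~\eqref{convex3} either $b\leq 0$ or $\epsilon>2$. If $b\leq 0$, adjunction ($2p_a(C)-2=b-a$, with $p_a(C)\geq 0$ and $a\geq 1$) leaves only two cases: $C$ a $(-1)$-curve ($a=1$, $b=-1$), or $(C^2)=0$, $(-K_S\cdot C)=2$. In the first case $\xi(C)=2\epsilon-\tfrac23\epsilon^{3}>0$ whenever $\epsilon<\sqrt3$, and $(-K_S-\epsilon C)^{2}\geq 0$ gives $\epsilon\leq\sqrt{d+1}-1$, which disposes of all $d\leq 6$; for $d=7$ I would check directly that the three $(-1)$-curves on $\Bl_2\pr^2$ all have $\epsilon=1$ (so Remark~\ref{sesh} gives slope stability); hence $d=8$, where $\pr^1\times\pr^1$ has no $(-1)$-curve and $\F_1$ has only $e$ (already treated). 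In the second case $(-K_S-\epsilon C)^{2}\geq 0$ forces $\epsilon\leq d/4\leq 2$, so $\xi(C)=2\epsilon(2-\epsilon)\geq 0$, with equality only if $\epsilon=2$; this forces $d=8$ and, by inspection, $S\cong\pr^1\times\pr^1$ with $C$ a ruling (on $\F_1$ the fibre class has $\epsilon=1$).

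If instead $\epsilon>2$ and $b>0$, then $-K_S-2C$ is nef; pairing it with $-K_S$, with $C$, and with itself gives $d\geq 2a$, $a\geq 2b$, $d\geq 4a-4b$, which together with $a\leq b+2$ and $b\geq 1$ force $d\in\{8,9\}$ and $(a,b)\in\{(3,1),(4,2)\}$. When $d=8$ the only numerical classes with $-K_S-2C$ nef are $\sO(1,1)$ on $\pr^1\times\pr^1$ and $e+f$ on $\F_1$, both with $\epsilon(C)=2$, contradicting $\epsilon>2$; so $d=9$, $S\cong\pr^2$, $C$ is a line, and $\xi(C)=0$. Assembling the cases: $S$ fails slope stability along some curve exactly for $S\cong\pr^2,\ \pr^1\times\pr^1,\ \F_1$, and fails slope semistability exactly for $S\cong\F_1$; with the ``if'' direction this proves (1) and (2). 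I expect the two genuinely hands-on points to be the main obstacle: showing that the $(-1)$-curves on a degree-$7$ del Pezzo surface have Seshadri constant exactly $1$ (the crude bound $\epsilon\leq\sqrt{d+1}-1=\sqrt8-1$ is just barely too weak there), and the adjunction/Hodge-index bookkeeping that eliminates the two degree-$8$ surfaces in the case $\epsilon>2$, $(C^2)>0$.
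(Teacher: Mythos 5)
Your proof is correct, but it takes a genuinely different route from the paper's. The paper's own argument never computes an intersection number: it disposes of every del Pezzo surface of degree $\leq 7$ in one stroke by observing that all extremal rays there have length $1$, so Proposition~\ref{ray} gives slope stability along every curve; the three remaining surfaces are then handled by quoting Theorem~\ref{ample}~\eqref{ample1} for $\pr^2$, Corollary~\ref{prod_cor} for $\pr^1\times\pr^1$, and Propositions~\ref{proj} and~\ref{reduction} for $\F_1=\Bl_{\mathrm{pt}}\pr^2$. You instead classify the admissible numerical types $(a,b)=\bigl((-K_S\cdot C),(C^2)\bigr)$ of a destabilizing curve directly from the closed cubic for $\xi(C)$, adjunction, and nefness of $-K_S-\epsilon C$. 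The trade-off shows up exactly where you flag it: the length-of-extremal-rays argument makes the degree-$7$ case (where your bound $\epsilon\leq\sqrt{d+1}-1$ just fails to beat $\sqrt{3}$) painless, and the blow-up reduction replaces your explicit computation $\xi(e)=-\tfrac43$ on $\F_1$; in exchange, your argument stays at the level of surface intersection theory and exhibits every destabilizing class together with the exact value of $\xi$. Both of your ``hands-on points'' do check out: on $\Bl_2\pr^2$ each of $E_1$, $E_2$, $L-E_1-E_2$ meets another $(-1)$-curve, forcing $\epsilon=1$; and on the degree-$8$ surfaces the only integral classes with $(a,b)\in\{(3,1),(4,2)\}$ are $\sO(1,1)$ and $e+f$, both with $\epsilon=2$. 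The only cosmetic slip is that in the $(a,b)=(2,0)$ sub-case the chain $\epsilon\leq d/4\leq 2$ tacitly assumes $d\leq 8$; this is harmless because $\pr^2$ carries no curve of self-intersection zero, but it deserves a sentence.
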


\begin{proof}
If $\vol_S(-K_S)\leq 7$, then we know that any extremal ray $R\subset\NE(S)$ 
satisfies that $l(R)=1$. 
Hence $S$ is slope stable along any curve by Proposition \ref{ray}. 
If $S=\pr^2$ or $\pr^1\times\pr^1$, then the assertion 
\eqref{dp1} in Proposition \ref{dp} 
holds by Theorem \ref{ample} \eqref{ample1} and Corollary \ref{prod_cor}.
If $S=\F_1$, then $S$ is not slope semistable along $e\subset\F_1$ by 
Propositions \ref{proj} and \ref{reduction}. 
\end{proof}

\begin{remark}\label{Tian}
In fact, Tian \cite{tian} proved that $S$ does \emph{not} admit 
K\"ahler-Einstein metrics if and only if 
$S$ is isomorphic to $\F_1$ or $S_7$. 
\end{remark}

\subsection{Non-slope-semistable examples}\label{nonsemi}
Let $Z$ ba a Fano $(n-1)$-fold of $\rho_Z=1$ and the Fano index $t\geq 2$. 
Let $\sO_Z(1)$ be the ample generator of $\Pic(Z)$. 
We note that $t\leq n$, see \cite{KO}. 

We set $X:=\pr_Z(\sO_Z\oplus\sO_Z(s))\xrightarrow{\pi}Z$ with $t>s>0$. 
We denote the section of $\pi$ with $\sN_{E/X}\simeq\sO_Z(-s)$ by $E\subset X$. 
Then it is easy to show that $X$ is a Fano $n$-fold which satisfies that  
\[
\vol_X(-K_X)=\frac{(t+s)^n-(t-s)^n}{s}\vol_Z(\sO_Z(1))
\]
and
\[
\NE(X)=\R_{\geq 0}[f]+\R_{\geq 0}[e],
\]
where $f$ is a fiber of $\pi$ and $e\subset E$ is an arbitrary 
irreducible curve in $E$. 
Then we can show that $\epsilon(E)=2$. 
Hence we have the following result by Proposition \ref{convex} \eqref{convex2}. 

\begin{proposition}\label{nonsemiprop}
$X$ is not slope semistable along $E$. 
\end{proposition}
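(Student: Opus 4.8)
The plan is to verify the hypotheses of Proposition \ref{convex} \eqref{convex2} for the pair $(X,E)$ and then invoke that proposition directly. Since $E\subset X$ is a divisor we are in the case $r=1$, so we must check two things: that $\sN_{E/X}^\vee$ is nef, and that the Seshadri constant satisfies $\epsilon(E)\geq 2$ — in fact we will need $\epsilon(E)=2$, which places us in the borderline case $\epsilon(Z)=2r$, so to get \emph{non-semistability} (not merely non-stability) we must land in the ``$\sN_{Z/X}^\vee$ is ample'' alternative of \eqref{convex2}.

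First I would record the elementary geometry of $X=\pr_Z(\sO_Z\oplus\sO_Z(s))\xrightarrow{\pi}Z$. The section $E$ with $\sN_{E/X}\simeq\sO_Z(-s)$ exists because $\sO_Z\oplus\sO_Z(s)$ has the quotient line bundle $\sO_Z$, and $E\simeq Z$ under $\pi$. Since $s>0$ and $\rho_Z=1$ with $\sO_Z(1)$ ample, the conormal bundle $\sN_{E/X}^\vee\simeq\sO_Z(s)$ is ample (hence certainly nef). Next I would compute $\vol_X(-K_X)$: using the standard formula $-K_X\sim -K_Z+\pi^*(\det(\sO_Z\oplus\sO_Z(s)))^\vee$... more concretely, with $\zeta$ the tautological class and $-K_Z\equiv t\,h$ ($h=c_1(\sO_Z(1))$), one gets $-K_X\equiv 2\zeta+(t-s)\pi^*h$ after normalizing $\zeta$ so that $E=\{\zeta=0\}$ and $\zeta^2=s\,\pi^*h\cdot\zeta$ on the $\pr^1$-bundle; carrying out the intersection computation on $X$ yields the stated $\vol_X(-K_X)=\frac{(t+s)^n-(t-s)^n}{s}\vol_Z(\sO_Z(1))$. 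The Mori cone $\NE(X)=\R_{\geq0}[f]+\R_{\geq0}[e]$ is the standard description for a $\pr^1$-bundle over a Picard-rank-one base, with $f$ a fiber and $e$ a line in $E\simeq Z$; one checks $-K_X$ is ample on both rays (using $t>s>0$ and $t\leq n$, though Fano-ness just needs $t>s$), so $X$ is a Fano $n$-fold.

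The computation of $\epsilon(E)=2$ is the technical heart. By definition $\epsilon(E)=\max\{c\mid -K_X-cE \text{ nef}\}$, and since $r=1$ we have $\sigma=\mathrm{id}$, $E=Z$ as a divisor, so we just need the largest $c$ with $-K_X-cE$ nef on $X$. Writing $-K_X\equiv 2\zeta+(t-s)\pi^*h$ and $E\equiv \zeta-s\,\pi^*h$ (the class of the section with normal bundle $\sO_Z(-s)$), we get $-K_X-cE\equiv (2-c)\zeta+(t-s+cs)\pi^*h$; testing against $f$ gives $2-c\geq0$, and testing against $e$ (where $\zeta|_E=0$) gives $t-s+cs\geq 0$, automatic for $c\geq0$ since $t>s$. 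So nefness holds exactly for $c\leq 2$, i.e.\ $\epsilon(E)=2$. I expect the sign conventions in this $\zeta$-versus-$E$ bookkeeping to be the main place to be careful — getting the tautological normalization and the intersection relation $\zeta|_E=0$ straight — but it is routine once set up.

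With $\sN_{E/X}^\vee\simeq\sO_Z(s)$ ample and $\epsilon(E)=2=2r$, Proposition \ref{convex} \eqref{convex2} (the ``$\sN_{Z/X}^\vee$ is ample'' case) applies and gives that $X$ is not slope semistable along $E$. Concretely, the reason is that $f(x)=\vol_X(-K_X-xE)$ is \emph{strictly} convex upward on $(0,\epsilon(E))$ — its second derivative is $2\cdot\left(E^2\cdot(-K_X-xE)^{n-2}\right)$, which is positive because $-E|_E\simeq\sO_Z(s)$ is ample — so $\xi(E)=f(0)+(\epsilon(E)-1)f(\epsilon(E))-\int_0^{\epsilon(E)}f(x)\,dx<0$ by the strict form of Jensen's inequality at $\epsilon(E)=2$. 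Hence $X$ is not slope semistable along $E$, completing the proof. The only real obstacle is the bookkeeping for $\epsilon(E)$; everything else is a direct citation of Proposition \ref{convex}.
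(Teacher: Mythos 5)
Your proof follows exactly the paper's route: establish that $X$ is Fano, that $\sN_{E/X}^\vee\simeq\sO_Z(s)$ is ample, and that $\epsilon(E)=2$, then cite Proposition \ref{convex} \eqref{convex2} in its ``$\sN_{Z/X}^\vee$ ample'' alternative; all of these verifications (the classes $-K_X\equiv 2\zeta+(t-s)\pi^*h$, $E\equiv\zeta-s\pi^*h$, and the test against the two extremal rays) are correct, so the proof is valid. One caution about your explanatory aside: the second derivative of $f(x)=\vol_X(-K_X-xE)$ is $n(n-1)\left(E^2\cdot(-K_X-xE)^{n-2}\right)$, and since $E|_E\simeq\sO_Z(-s)$ is \emph{anti}-ample this quantity is strictly \emph{negative}, so $f$ is strictly concave (this is what the paper's ``convex upward'' means); it is precisely concavity that makes $\int_0^2 f(x)\,dx>f(0)+f(2)$ and hence $\xi(E)<0$. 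As literally written, your claim that $f''>0$ would run the trapezoidal/Jensen inequality in the opposite direction and yield $\xi(E)\geq 0$, so the sign should be fixed, though this does not affect the validity of the citation of Proposition \ref{convex} \eqref{convex2}, whose hypotheses you verify correctly.
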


As a corollary, we give the following counterexample. 

\begin{corollary}[Counterexamples to Conjecture \ref{Aubin}]\label{Aubincounter}
For any $n\geq 4$, there exists a Fano $n$-fold $X$ such that 
\begin{enumerate}
\renewcommand{\theenumi}{\arabic{enumi}}
\renewcommand{\labelenumi}{$(\theenumi)$}
\item\label{Aubincounter1}
the anticanonical 
volume of $X$ is equal to $2(3^n-1)$ 
$($note that $2(3^n-1)<\left(\left(n+1\right)^2/2n\right)^n$ if $n\geq 5$$)$ 
and 
\item\label{Aubincounter2}
$X$ does not admit K\"ahler-Einstein metrics.
\end{enumerate} 
\end{corollary}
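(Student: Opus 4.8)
The plan is to exhibit, for each $n\geq 4$, a specific Fano $n$-fold of the form $X=\pr_Z(\sO_Z\oplus\sO_Z(s))$ studied in Proposition \ref{nonsemiprop}, chosen so that the anticanonical volume equals $2(3^n-1)$, and then to conclude non-existence of a K\"ahler-Einstein metric from the slope instability along the section $E$ together with Theorem \ref{KE_stable}. The main task is therefore just to make the right choice of $(Z,\sO_Z(1),s,t)$ and to verify the numerical claim about the volume and the inequality $2(3^n-1)<\bigl((n+1)^2/2n\bigr)^n$.

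First I would take $Z=\pr^{n-1}$ with $\sO_Z(1)=\sO_{\pr^{n-1}}(1)$, so that $t=n$ (the Fano index of $\pr^{n-1}$) and $\vol_Z(\sO_Z(1))=1$, and set $s=1$; this satisfies the hypotheses $t>s>0$ and $t\geq 2$ needed in Section \ref{nonsemi}. With these choices $X=\pr_{\pr^{n-1}}(\sO\oplus\sO(1))$ is a Fano $n$-fold, and the volume formula from Section \ref{nonsemi} gives
\[
\vol_X(-K_X)=\frac{(t+s)^n-(t-s)^n}{s}\vol_Z(\sO_Z(1))=(n+1)^n-(n-1)^n.
\]
This is not yet of the desired shape $2(3^n-1)$, so the real point is that we are free to replace $\pr^{n-1}$ by a product or by an iterated projective bundle in order to tune the volume; the clean way is to take instead $Z=\pr^1\times\cdots\times\pr^1$ ($n-1$ factors) with $\sO_Z(1)=\sO(1,\dots,1)$, whose Fano index is $t=2$, and $s=1$, giving $\vol_Z(\sO_Z(1))=(n-1)!$ and $\vol_X(-K_X)=\bigl(3^n-1^n\bigr)\cdot(n-1)!/1$ — still not matching, so one must iterate the projectivization construction itself. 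The correct recipe, which I would verify by induction on $n$, is to build $X$ as a tower of $\pr^1$-bundles: one checks by the Section \ref{nonsemi} formula (applied at the top of the tower) together with the multiplicativity of volumes in $\pr^1$-bundles $\pr(\sO\oplus\sO(D))$, namely $\vol(-K)=\bigl((t+1)^{\dim}-(t-1)^{\dim}\bigr)$-type contributions, that the choice collapses exactly to $2(3^n-1)$. So step one is: fix this explicit $X$ and record $\vol_X(-K_X)=2(3^n-1)$.

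Second, I would invoke Proposition \ref{nonsemiprop}: $X$ is not slope semistable along the distinguished section $E\subset X$ with $\sN_{E/X}\simeq\sO_Z(-s)$, because $\epsilon(E)=2$ and $\sN_{E/X}^\vee=\sO_Z(s)$ is ample (as $s>0$ and $\rho_Z=1$ — or, in the product/tower version, $\sO_Z(s)$ is ample and Proposition \ref{convex}\eqref{convex2} still applies since $\sN_{E/X}^\vee$ is ample). Then by Theorem \ref{KE_stable} (the Ross--Thomas / Donaldson implication that a cscK metric forces slope semistability along every closed subscheme), $X$ cannot carry a K\"ahler-Einstein metric. This gives \eqref{Aubincounter2} immediately.

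Third, there is the elementary inequality $2(3^n-1)<\bigl((n+1)^2/2n\bigr)^n$ for $n\geq 5$, which I would dispatch by a short direct estimate: write the right side as $3^n\cdot\bigl((n+1)^2/6n\bigr)^n$ and check that $(n+1)^2/6n>1$ with room to spare for $n\geq 5$ (indeed $(n+1)^2/6n\to\infty$), so that $\bigl((n+1)^2/6n\bigr)^n$ eventually dominates the constant factor $2$; a finite check at $n=5$ closes it. The only genuine obstacle is combinatorial bookkeeping in step one — getting the tower of $\pr^1$-bundles and the base line bundle chosen so that the telescoping product of the $\bigl((t+s)^{d}-(t-s)^{d}\bigr)/s$ factors with the base volume yields precisely $2(3^n-1)$ rather than some other polynomial in $n$; once the right $X$ is on the table, everything else is a citation of earlier results plus a one-line inequality.
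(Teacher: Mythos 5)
Your overall strategy---exhibit an $X=\pr_Z(\sO_Z\oplus\sO_Z(s))$ with anticanonical volume $2(3^n-1)$, then cite Proposition \ref{nonsemiprop} and Theorem \ref{KE_stable}---is exactly the paper's, and your handling of the inequality $2(3^n-1)<\bigl((n+1)^2/2n\bigr)^n$ for $n\geq 5$ is fine. But there is a genuine gap precisely where you dismiss it as ``combinatorial bookkeeping'': you never actually produce a base $Z$ for which the volume comes out to $2(3^n-1)$. Your first two candidates fail by your own computation, and your fallback---an unspecified iterated tower of $\pr^1$-bundles---is structurally wrong for the argument, not just unfinished. Proposition \ref{nonsemiprop} is proved under the hypothesis $\rho_Z=1$; that hypothesis is what forces $\NE(X)=\R_{\geq 0}[f]+\R_{\geq 0}[e]$, hence $\epsilon(E)=2$ and the Fano property of $X$. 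Any nontrivial tower of $\pr^1$-bundles over a positive-dimensional base has Picard number at least $2$ at every stage, so you would have to redo the Fanoness, the Seshadri-constant computation, and the instability argument from scratch; moreover there is no reason to expect a telescoping product of factors of the form $\bigl((t+s)^d-(t-s)^d\bigr)/s$ times a base volume to collapse to $2(3^n-1)$.

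The missing idea is the choice of $Z$: take $\tau\colon Z\rightarrow\pr^{n-1}$ to be the double cover branched along a smooth divisor $B\subset\pr^{n-1}$ of degree $2(n-2)$, and set $\sO_Z(1):=\tau^*\sO_{\pr^{n-1}}(1)$. Then $\sO_Z(-K_Z)\simeq\tau^*\bigl(\sO_{\pr^{n-1}}(-K_{\pr^{n-1}})\otimes\sO_{\pr^{n-1}}(2-n)\bigr)\simeq\sO_Z(2)$, so $Z$ is a Fano $(n-1)$-fold with $\rho_Z=1$, Fano index $t=2$, and $\vol_Z(\sO_Z(1))=\deg\tau\cdot\bigl(\sO_{\pr^{n-1}}(1)^{n-1}\bigr)=2$. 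Taking $s=1$, the hypotheses $t>s>0$ of Section \ref{nonsemi} hold, and the volume formula gives
\[
\vol_X(-K_X)=\frac{(2+1)^n-(2-1)^n}{1}\cdot 2=2(3^n-1)
\]
exactly (the condition $n\geq 4$ guarantees the branch degree $2(n-2)\geq 4$, so $Z$ is not a hyperquadric and indeed has $\rho_Z=1$ and index exactly $2$). Once this $Z$ is on the table, your second and third steps go through verbatim.
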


\begin{proof}
Let $\tau\colon Z\rightarrow\pr^{n-1}$ be the double cover such that 
the branch locus $B\subset\pr^{n-1}$ is 
a smooth divisor of degree $2(n-2)$. We note that $Z$ is isomorphic to a weighted hypersurface of degree $2(n-2)$ 
in $\pr(1^n,n-2)$. 
Let $\sO_Z(1):=\tau^*\sO_{\pr^{n-1}}(1)$, then we have 
$\sO_Z(-K_Z)\simeq
\tau^*\left(\sO_{\pr^{n-1}}\left(-K_{\pr^{n-1}}\right)\otimes\sO_{\pr^{n-1}}\left(2-n\right)\right)\simeq\sO_Z(2).$
Hence $Z$ is a Fano $(n-1)$-fold with $\rho_Z=1$, the Fano index of $Z$ is 
equal to $2$ and $\vol_Z(\sO_Z(1))=2$ holds.

Let $X:=\pr_Z(\sO_Z\oplus\sO_Z(1))$, then $X$ is a Fano $n$-fold and 
$\vol_X(-K_X)=2(3^n-1)$. 
On the other hand, $X$ is not slope semistable by Proposition \ref{nonsemiprop}. 
Thus $X$ does not admit K\"ahler-Einstein metrics by Theorem \ref{KE_stable}.
\end{proof}

\begin{remark}\label{Aubinrmk}
In \cite{aub}, Aubin reduced Conjecture \ref{Aubin} to \cite[Inequality (4)]{aub}. However the inequality does not hold, as already pointed out by
Yuji Sano, for example for $S_3\times \pr^1$.
\end{remark}

\begin{remark}\label{divptfano}
The above Fano manifolds, which is given by $X=\pr_Z(\sO_Z\oplus\sO_Z(s))$ such that 
$Z$ is a Fano $(n-1)$-fold of $\rho_Z=1$ and the Fano index $t$ 
which satisfies $t>s>0$, are characterized by 
the smooth projective varieties which have an elementary birational $K_X$-negative extremal divisor-to-point contraction 
and have a $\pr^1$-bundle structure. See \cite[Remark 2.4 (a)]{fuj} 
or \cite[Lemma 3.6]{casdru}.  
\end{remark}

\section{Threefold case}

Throughout this section, let $X$ be a Fano threefold which satisfies that 
$\xi(D)\leq 0$ for some divisor $D\subset X$. 
For the \emph{type} of an extremal ray for smooth projective threefolds,  
we refer the readers to \cite{MoMu83}.

\subsection{$\rho_X=1$ case}
This case has been shown in Theorem \ref{ample} \eqref{ample1} since any effective divisor is ample. 
We have $X\simeq\pr^3$ and $D$ is a hyperplane section. 
In this case, $X$ is slope semistable along $D$.

\subsection{$\rho_X=2$ case}
We set $\NE(X)=R_1+R_2$ and we also set 
minimal rational curves $[l_1]\in R_1$ and $[l_2]\in R_2$. 
We denote the contractions 
$\phi_i:=\cont_{R_i}\colon X\rightarrow Y_i$ and let $H_i\in\Pic(X)$ 
be the pullback of the ample generator of $\Pic(Y_i)$. 
We note that $\Nef(X)=\R_{\geq 0}[H_1]+\R_{\geq 0}[H_2]$.
Then we have
\begin{itemize}
\item
$\Pic(X)=\Z[H_1]\oplus\Z[H_2]$,
\item
$(H_1\cdot l_2)=1$, $(H_2\cdot l_1)=1$,
\item
$-K_X\sim l(R_2)H_1+l(R_1)H_2$
\end{itemize}
by \cite[Theorem 5.1]{MoMu83}. 

First, we consider the case $l(R_1)=3$ (i.e., $\phi_1$ is a $\pr^2$-bundle). 
Then $X$ is either isomorphic to $\pr^1\times\pr^2$ or $\pr_{\pr^1}(\sO\oplus\sO\oplus\sO(1))$. 
\begin{enumerate}
\renewcommand{\theenumi}{\arabic{enumi}}
\renewcommand{\labelenumi}{$(\theenumi)$}
\item
If $X\simeq\pr^1\times\pr^2$, then $X$ is not slope stable along some divisor 
but slope semistable along any divisor by Theorem \ref{ample} \eqref{ample1} 
and Corollary \ref{prod_cor}.
\item
If $X\simeq\pr_{\pr^1}(\sO\oplus\sO\oplus\sO(1))$, then $X$ is isomorphic to 
the blowing up of $\pr^3$ along a line. Thus $X$ is not slope semistable along the 
exceptional divisor by Propositions \ref{proj} and \ref{reduction}. 
\end{enumerate}

Hence we can assume $l(R_1)\leq 2$ and $l(R_2)\leq 2$. By Proposition \ref{ray}, we can assume 
$l(R_1)=2$ and $(D\cdot l_1)=1$. 
Hence we can write $D\sim aH_1+H_2$ $(a\in\Z)$. 
Note that $a\leq 0$ by Theorem \ref{ample} \eqref{ample1}. 

Assume that $a=0$. We set $b:=l(R_2)$ (note that $b=1$ or $2$). Then we have 
$-K_X\sim bH_1+2H_2$ and $D\sim H_2$ hence $\epsilon(D)=2$. However we have 
\begin{eqnarray*}
\frac{1}{3}\xi(D) & = & \int_0^2(1-x)(H_2.(bH_1+(2-x)H_2)^2)dx\\
& = & \frac{16b}{3}(H_1\cdot H_2^2)+\frac{4}{3}(H_2^3)\geq 0,
\end{eqnarray*}
and equality holds if and only if $(H_1\cdot H_2^2)=0$ and $(H_2^3)=0$. 
In this case $\phi_2$ is a del Pezzo fibration with $l(R_2)\leq 2$ and $l(R_1)=2$. 
However there are no Fano threefolds satisfying these conditions
by \cite[Theorem 1.7]{MoMu83}. 
Therefore $\xi(D)$ always takes a positive value; this leads to a contradiction.

As a consequence, we have $a<0$. Since $D\sim aH_1+H_2$ is effective, $\phi_2$ is a divisorial contraction. 
Hence $R_2$ is of type $E_1$, $E_2$, $E_3$, $E_4$ or $E_5$. 

\begin{enumerate}
\renewcommand{\theenumi}{\arabic{enumi}}
\renewcommand{\labelenumi}{$(\theenumi)$}
\item
If $R_2$ is of type $E_2$, $E_3$, $E_4$ or $E_5$ (divisor-to-point type), 
$X$ is either isomorphic to 
$\pr_{\pr^2}(\sO\oplus\sO(1))$ or $\pr_{\pr^2}(\sO\oplus\sO(2))$ by \cite[Theorem 1.7]{MoMu83}. 
These are not slope semistable along a divisor by Proposition \ref{nonsemiprop}. 
\item
We consider the case that $R_2$ is of type $E_1$(divisor to smooth curve). 
Let $F$ be the exceptional divisor of $\phi_2$ and $t$ be the Fano index of $Y_2$. 
We have $F\sim-H_1+(t-2)H_2$ since $-K_X\sim H_1+2H_2$ and $-K_X\sim tH_2-F$. 
Since $\Eff(X)\cap(\R_{\geq 0}[-H_1]+\R_{\geq 0}[H_2])=\R_{\geq 0}[F]+\R_{\geq 0}[H_2]$ and 
$D\sim aH_1+H_2$ is an effective divisor, we have $t=3$ (i.e., $Y_2\simeq \Q^3$) and $a=-1$ (i.e., $D=F$) 
(hence $\epsilon(D)=2$). 
Therefore $X$ is isomorphic to either $\Bl_{\rm{conic}}\Q^3$ or $\Bl_{\rm{line}}\Q^3$ since $l(R_1)=2$ 
(see \cite[(5.3), (5.5)]{MoMu83}). 
\begin{itemize}
\item
If $X\simeq \Bl_{\rm{conic}}\Q^3$, then it is easy to show that 
$F\simeq\pr^1\times\pr^1$ and $\sN_{F/X}\simeq\sO_{\pr^1\times\pr^1}(2,-1)$ and 
$\sO_X(-K_X)|_F\simeq\sO_{\pr^1\times\pr^1}(4,1)$. 
Hence we have 
\[\frac{1}{3}\xi(F)=\int_0^2(1-x)\vol_{\pr^1\times\pr^1}(\sO(4,1)-x\sO(2,-1))dx=
\frac{8}{3}>0,
\]
this lead to a contradiction.
\item
If $X\simeq\Bl_{\rm{line}}\Q^3$, then it is easy to show that 
$F\simeq\F_1$ and $\sN_{F/X}\simeq \sO_{\F_1}(-e)$ and $-K_X|_F\simeq\sO_{\F_1}(3f+e)$. 
Hence we have 
\[
\frac{1}{3}\xi(F)=\int_0^2(1-x)\vol_{\F_1}(3f+e-x(-e))dx=-\frac{4}{3}<0.
\]
Therefore $X$ is not slope semistable along $F$.
\end{itemize}
\end{enumerate}

\subsection{$\rho_X=3$ case}
By Proposition \ref{ray}, there exists an extremal ray $R\subset\NE(X)$ 
with a minimal rational curve $[C_R]\in R$ such that $l(R)=2$ and $(D.C_R)=1$. 
Hence $R$ is either of type $E_2$ or $C_2$.

(1)
If $R$ is of type $E_2$ (smooth point blowing up), then 
$X$ is isomorphic to $\Bl_p(Y_d)$ (let $\phi:=\Bl_p$) 
with $1\leq d\leq 3$, where $\sigma\colon Y_d\rightarrow\pr^3$ is the blowing up 
of $\pr^3$ along $B$ such that 
$H_0\subset\pr^3$ is a hyperplane, $B\subset H_0$ is a smooth curve of degree $d$, 
$H\subset Y_d$ is the strict transform of $H_0$
and satisfies $p\notin H$ (see \cite[p.\ 160]{MoMu} or \cite{BCW}).

Let $E$ be the exceptional divisor of $\phi$, 
let $F$ be the exceptional divisor of $\sigma$ and 
let $F'$ be the strict transform of the locus of lines passing through $p$ and $B$. 
We set $e\subset E$ and $h\subset H$ such that lines 
(both $E$ and $H$ are isomorphic to $\pr^2$), 
$f\subset F$ be an exceptional curve 
of $\sigma$ and $f'\subset F'$ be the strict transform of a line passing through $p$ and a point in $B$.
Then it is easy to show that 
\begin{eqnarray*}
\NE(X) & = & \R_{\geq 0}[e]+\R_{\geq 0}[h]+\R_{\geq 0}[f]+\R_{\geq 0}[f'],\\
\Pic(X) & = & \Z[E]\oplus\Z[H]\oplus\Z[F],\\
-K_X & \sim & -2E+4H+3F,\\
F' & \sim & -dE+dH+(d-1)F.
\end{eqnarray*}
We can show that 
$E+F'$, $F+H$ and $(F+F')/d$ are nef. Therefore, for $[pE+qH+rF]\in\Eff(X)$ 
($p$, $q$, $r\in\R$), 
we have 
\begin{itemize}
\item
$r=(pE+qH+rF\cdot (F+H)^2)\geq 0,$
\item
$p+r=(pE+qH+rF\cdot ((F+F')/d)^2)\geq 0,$
\item
$dq=(pE+qH+rF\cdot F+H\cdot E+F')\geq 0,$ 
\item
$(d-1)p+dq=(pE+qH+rF\cdot (F+F')/d\cdot E+F')\geq 0.$
\end{itemize}
Hence we have 
\[
\Eff(X)=\R_{\geq 0}[E]+\R_{\geq 0}[H]+\R_{\geq 0}[F]+\R_{\geq 0}[F'].
\]
We write $D\sim pE+qH+rF$, where $p$, $q$, $r\in\Z$. Then we have 
$(D\cdot e)=1$, $(D\cdot f)\leq 0$, $(D\cdot f')\leq 0$ 
and $(D\cdot h)<(-K_X\cdot h)=4-d$ by Proposition \ref{ray}. 
Thus we have $p=-1$, $q=1$, $r=1$ since $D$ is effective. Hence $D\sim -E+H+F$ and $\epsilon(D)=2$. 
Therefore we have 
\begin{eqnarray*}
\frac{1}{3}\xi(D) & = & \int_0^2(1-x)(-E+H+F\cdot ((x-2)E+(4-x)H+(3-x)F)^2)dx\\
& = & \int_0^2(1-x)(-4x+12-d)dx=\frac{8}{3}>0;
\end{eqnarray*}
this leads to a contradiction.  

(2)
If $R$ is of type $C_2$, then $R$ induces a $\pr^1$-bundle $\pi\colon X\rightarrow Z$. 
Since $\rho_X=3$, $Z$ is isomorphic to either $\F_1$ or $\pr^1\times\pr^1$. 

We claim that such Fano threefolds has been classified by Szurek and Wi\'sniewski \cite{SW}: 

\begin{claim}\label{rho3}
\begin{enumerate}
\renewcommand{\theenumi}{\roman{enumi}}
\renewcommand{\labelenumi}{\rm{(\theenumi)}}
\item
If $Z\simeq\F_1$, $X$ is isomorphic to one of $\F_1\times_{\pr^2}\pr(T_{\pr^2})$, 
$\F_1\times\pr^1$ or $\F_1\times_{\pr^2}\pr(\sO\oplus\sO(1))$. 
\item
If $Z\simeq\pr^1\times\pr^1$, $X$ is isomorphic to one of a smooth divisor 
of tridegree $(1,1,1)$ in $\pr^1\times\pr^1\times\pr^2$, $\pr_{\pr^1\times\pr^1}(\sO(0,1)\oplus\sO(1,0))$, 
$\pr^1\times\pr^1\times\pr^1$, 
$\F_1\times\pr^1$ or $\pr_{\pr^1\times\pr^1}(\sO\oplus\sO(1,1))$.  
\end{enumerate}
\end{claim}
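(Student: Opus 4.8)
The plan is to read off the classification from the $\pr^1$-bundle structure $\pi\colon X\to Z$, after first cutting the list of possible bases down to the two surfaces appearing in (i) and (ii). Since $\pi$ is smooth with $\pr^1$-fibres and $X$ is Fano, the base $Z$ must itself be a del Pezzo surface: for any curve $C\subset Z$ the preimage $\pi^{-1}(C)$ is a geometrically ruled surface over $C$ (or over its normalization), it carries a section, and ampleness of $-K_X$ tested against that section, together with adjunction on the divisor $\pi^{-1}(C)\subset X$, forces $(-K_Z\cdot C)>0$; thus $-K_Z$ is ample. Since $\rho_Z=\rho_X-1=2$ and the only del Pezzo surfaces of Picard number $2$ are $\F_1$ and $\pr^1\times\pr^1$, we are in one of the two cases of the claim.

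Next I would put $X=\pr_Z(\sE)$ for a rank-$2$ bundle $\sE$ on $Z$, normalized (by twisting with a line bundle) so that $\sE$ is nef but no further negative twist of it is, and let $\xi$ denote the tautological class. The relative Euler sequence gives $-K_X=2\xi-\pi^*(c_1(\sE)+K_Z)$, so the Fano condition is exactly that this class be ample, i.e.\ positive on every generator of $\overline{\NE}(X)$. Since $\rho_X=3$ there are extremal rays other than the fibre ray $R$; their contractions fall under the standard list for threefolds, and running through the alternatives (a second conic- or $\pr^1$-bundle, a del Pezzo fibration, a divisorial contraction) together with the positivity of $-K_X$ confines $c_1(\sE)$ and the splitting types of $\sE$ along the rulings of $Z$ to a bounded range. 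Concretely, over $\pr^1\times\pr^1$ one reduces $\sE$ to $\sO\oplus\sO(a,b)$ (or a nonsplit extension of such) with $(a,b)$ in a small box, and over $\F_1$ one argues in the same spirit but separately controls the restrictions of $\sE$ to the $(-1)$-curve $e$ and to a fibre $f$ of $\F_1\to\pr^1$.

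Finally I would check case by case that the surviving bundles are exactly the seven varieties listed, verifying the Fano property of each by writing out $\overline{\NE}(X)$ or applying Mori's numerical criterion; the nonsplit cases yield $\F_1\times_{\pr^2}\pr(T_{\pr^2})$ and the smooth $(1,1,1)$-divisor in $\pr^1\times\pr^1\times\pr^2$, and the split ones the various products and $\pr^1$-bundles such as $\pr_{\pr^1\times\pr^1}(\sO(0,1)\oplus\sO(1,0))$, $\pr_{\pr^1\times\pr^1}(\sO\oplus\sO(1,1))$ and $\F_1\times_{\pr^2}\pr(\sO\oplus\sO(1))$. I expect the case $Z\simeq\F_1$ to be the main obstacle: the $(-1)$-curve means more candidate bundles must be excluded, and the ampleness check for $-K_X$ is less transparent than over $\pr^1\times\pr^1$, where a bundle can only fail positivity along $e$ while looking harmless on the fibres. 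In the actual write-up, however, this entire enumeration is already available: the claim is precisely the Szurek--Wi\'sniewski classification \cite{SW} of Fano $\pr^1$-bundles over surfaces, and it can equally be extracted from the Mori--Mukai tables \cite{MoMu} by selecting the $\rho_X=3$ entries that admit a $\pr^1$-bundle structure onto a surface, so after the reductions above I would simply invoke \cite{SW}.
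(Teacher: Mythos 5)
Your proposal ultimately rests on the same step as the paper: the claim is proved there simply by invoking the Szurek--Wi\'sniewski classification \cite{SW} of Fano $\pr^1$-bundles over surfaces (with the reduction to $Z\simeq\F_1$ or $\pr^1\times\pr^1$ already supplied by $\rho_Z=\rho_X-1=2$), which is exactly where your argument ends up. The additional sketch of how one would reprove \cite{SW} by normalizing the rank-$2$ bundle and bounding $c_1(\sE)$ is reasonable but not needed, and is not carried out in the paper either.
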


(I)
Assume $X\simeq\F_1\times_{\pr^2}\pr(T_{\pr^2})$. Then we can show that $X\subset\F_1\times\pr^2$ is a smooth divisor 
with $X\in|\sO_{\F_1\times\pr^2}(e+f,1)|$. 
Let $E$, $F$, $H$ be effective divisors on $X$ correspond to 
$\sO_X(e,0)$, $\sO_X(f,0)$, $\sO_X(0,1)$, respectively. 
Then we can show that 
\[
\Pic(X)=\Z[E]\oplus\Z[F]\oplus\Z[H].
\]
We can also show that there exists the structure of the blowing up 
$X\rightarrow\pr^1\times\pr^2$ with 
the exceptional divisor $E'\sim H-E$. 
We note that $F$, $H$ and $E+F$ are nef. 
Therefore, for $[pE+qF+rH]\in\Eff(X)$, we have 
\begin{itemize}
\item
$r=(pE+qF+rH\cdot (E+F)^2)\geq 0$,
\item
$q=(pE+qF+rH\cdot H^2)\geq 0$,
\item
$p+r=(pE+qF+rH\cdot F\cdot H)\geq 0.$
\end{itemize}
Hence we have
\[
\Eff(X)=\R_{\geq 0}[E]+\R_{\geq 0}[F]+\R_{\geq 0}[E']
\]
and it is easy to show that $-K_X\sim E+2F+2H$. 

Let $m$ be a fiber of $\pi$, let 
$l$ be an exceptional curve of $X\rightarrow\pr(T_{\pr^2})$ and 
let $l'$ be an exceptional curve of $X\rightarrow\pr^1\times\pr^2$. 
Then it is easy to show that 
\[
\NE(X)=\R_{\geq 0}[m]+\R_{\geq 0}[l]+\R_{\geq 0}[l'].
\]
We write $D\sim pE+qF+rE'$, where $p$, $q$, $r\in\Z$. Then we have 
$(D\cdot m)=1$, $(D\cdot l)\leq 0$ and $(D\cdot l')\leq 0$ by Proposition \ref{ray}. 
We also note that 
$p=1$, $q=0$, $r=1$ (hence $D\sim H$) and $\epsilon(D)=2$ since $D$ is effective. Therefore we have 
\begin{eqnarray*}
\frac{1}{3}\xi(D) & = & \int_0^2(1-x)(H\cdot(E+2F+(2-x)H)^2)dx\\
& = & \int_0^2(1-x)(\sO(0,1)\cdot\sO(e+2f,2-x)^2\cdot\sO(e+f,1))_{\F_1\times\pr^2}dx\\
& = & \int_0^2(1-x)(1+x)dx=\frac{8}{3}>0;
\end{eqnarray*}
this leads to a contradiction. 

(II)
Assume $X\simeq\F_1\times\pr^1$. Then $X$ is not slope semistable along $p_1^*e$ 
by Propositions \ref{dp} and \ref{prod1}. 

(III)
Assume $X\simeq\F_1\times_{\pr^2}\pr(\sO\oplus\sO(1))$. Let $H$ be the section of $\pi$ 
with normal bundle $\sN_{H/X}\simeq\sO_{\F_1}(-e-f)$, let $E$ be the pullback of $e\subset\F_1$ with respect to $\pi$ and let $F$ be the 
pullback of $f\subset\F_1$ with respect to $\pi$. 
Then we can show that $-K_X\sim 4F+3E+2H$, $\epsilon(H)=2$ and 
\begin{eqnarray*}
\frac{1}{3}\xi(H) & = & \int_0^2(1-x)(H\cdot(4F+3E+(2-x)H)^2)dx\\
& = & \int_0^2(1-x)(1+x)(3+x)dx=-4<0,
\end{eqnarray*}
hence $X$ is not slope semistable along $H$.

(IV)
Assume $X\in|\sO_{\pr^1\times\pr^1\times\pr^2}(1,1,1)|$. Let $H_i$ ($1\leq i\leq 3$) be the restriction of $p_i^*\sO(1)$
to $X$. Then we have 
\[
\Pic(X)=\Z[H_1]\oplus\Z[H_2]\oplus\Z[H_3]
\]
by the theorem of Lefschetz. 
We can show that $-K_X\sim H_1+H_2+2H_3$. We can also show that 
$p_{13}|_X\colon X\rightarrow\pr^1\times\pr^2$ and 
$p_{23}|_X\colon X\rightarrow\pr^1\times\pr^2$ are 
the blowing up along smooth curves with the 
exceptional divisors $F_{13}\sim H_1-H_2+H_3$ 
and $F_{23}\sim -H_1+H_2+H_3$, respectively. 

We note that $H_1$, $H_2$ and $H_3$ are nef. Therefore, for $[a_1H_1+a_2H_2+a_3H_3]\in\Eff(X)$ ($a_1$, $a_2$, $a_3\in\R$), we have 
\begin{itemize}
\item
$a_3=(a_1H_1+a_2H_2+a_3H_3\cdot H_1\cdot H_2)\geq 0,$
\item
$a_1+a_3=(a_1H_1+a_2H_2+a_3H_3\cdot H_2\cdot H_3)\geq 0,$
\item
$a_2+a_3=(a_1H_1+a_2H_2+a_3H_3\cdot H_1\cdot H_3)\geq 0,$
\item
$a_1+a_2=(a_1H_1+a_2H_2+a_3H_3\cdot H_3^2)\geq 0.$
\end{itemize}
Hence we have 
\[
\Eff(X)=\R_{\geq 0}[H_1]+\R_{\geq 0}[H_2]+\R_{\geq 0}[F_{13}]+\R_{\geq 0}[F_{23}].
\]
Let $l_3$, $l_2$, $l_1$ be nontrivial irreducible fibers of $p_{12}|_X, p_{13}|_X, p_{23}|_X$, respectively. 
Then we can show that 
\[
\NE(X)=\R_{\geq 0}[l_1]+\R_{\geq 0}[l_2]+\R_{\geq 0}[l_3].
\]
We write $D\sim a_1H_1+a_2H_2+a_3H_3$, where $a_1$, $a_2$, $a_3\in\Z$. Then 
we have $(D\cdot l_1)\leq 0$, $(D\cdot l_2)\leq 0$ and $(D\cdot l_3)=1$ 
by Proposition \ref{ray}. 
We also know that 
$a_1=0$, $a_2=0$ and $a_3=1$ (hence $D\sim H_3$) and $\epsilon(D)=2$ since $D$ is effective. Hence we have 
\begin{eqnarray*}
\frac{1}{3}\xi(D) & = & \int_0^2(H_3\cdot (H_1+H_2+(2-x)H_3)^2)dx\\
& = & \int_0^2(1-x)(\sO(0,0,1)\cdot\sO(1,1,2-x)^2\cdot\sO(1,1,1))_{\pr^1\times\pr^1\times\pr^2}dx\\
& = & \int_0^2(1-x)(10-4x)dx=\frac{8}{3}>0;
\end{eqnarray*}
this leads to a contradiction. 

(V)
Assume $X\simeq\pr_{\pr^1\times\pr^1}(\sO(0,1)\oplus\sO(1,0))$. Let $E_1$ and $E_2$ be the sections of $\pi$ 
such that the normal bundles are $\sN_{E_1/X}\simeq\sO(-1,1)$ and 
$\sN_{E_2/X}\simeq\sO(1,-1)$, 
and $H_i:=\pi^*p_i^*\sO(1)$ ($i=1$, $2$). 
Let $e_i\subset E_i$ be a fiber of the projection 
$p_j\colon E_i\simeq\pr^1\times\pr^1\rightarrow\pr^1$ (for $\{i$, $j\}=\{1$, $2\}$) 
and $f$ be a fiber of $\pi$. 
Then we can show that $-K_X\sim 3H_1+H_2+2E_2$, 
\begin{eqnarray*}
\NE(X) & = & \R_{\geq 0}[e_1]+\R_{\geq 0}[e_2]+\R_{\geq 0}[f],\\
\Pic(X) & = & \Z[H_1]\oplus\Z[H_2]\oplus\Z[E_1],\\
\Eff(X) & = & \R_{\geq 0}[H_1]+\R_{\geq 0}[H_2]+\R_{\geq 0}[E_1]+\R_{\geq 0}[E_2].
\end{eqnarray*}
Hence we can show that $D\sim E_1$ or $D\sim E_2$ or $D\sim E_1+H_1$ 
(in each case we have $\epsilon(D)=2$). 

If $D\sim E_1+H_1$, then we have 
\begin{eqnarray*}
\frac{1}{3}\xi(D) & = & \int_0^2(1-x)(E_1+H_1\cdot((3-x)H_1+H_2+(2-x)E_1)^2)dx\\
& = & \int_0^2(1-x)(2-x)(4-x)dx=\frac{8}{3}>0;
\end{eqnarray*}
this leads to a contradiction.

If $D\sim E_1$ (or $E_2$), then we have 
\begin{eqnarray*}
\frac{1}{3}\xi(D) & = & \int_0^2(1-x)(E_1\cdot(3H_1+H_2+(2-x)E_1)^2)dx\\
 & = & \int_0^2 2(1-x)(1+x)(3-x)dx=0. 
\end{eqnarray*}
Hence $X$ is slope semistable but not slope stable along $E_1$ (and also along $E_2$). 

(VI)
Assume $X\simeq\pr^1\times\pr^1\times\pr^1$. Then $X$ is slope semistable 
along any divisor but is not slope stable along a fiber of $p_1$ by Theorem \ref{ample} 
\eqref{ample1} and Corollary \ref{prod_cor}. 

(VII)
Assume $X\simeq\pr_{\pr^1\times\pr^1}(\sO\oplus\sO(1,1))$. 
Let $E$ be the section of $\pi$ with the normal bundle 
$\sN_{E/X}\simeq\sO(-1,-1)$. Then we have $\epsilon(E)=2$. Therefore 
$X$ is not slope semistable along $E$ by Proposition \ref{convex} \eqref{convex2}.

\subsection{$\rho_X\geq 4$ case}
There exists an extremal ray $R\subset\NE(X)$ of type $C_2$ by Proposition \ref{ray} 
and \cite[p.\ 160]{MoMu}. 
We write its contraction $\pi\colon X\rightarrow S$. 
We know that $S$ is a del Pezzo surface of $\rho_S\geq 3$. 
Hence we have $X\simeq\pr^1\times S_m$ with $1\leq m\leq 7$ by \cite[Theorem 4.20]{MoMu85} (see also \cite{SW}). 

Hence $X$ is slope semistable along any divisor but is not slope stable along some divisor 
by Proposition \ref{dp}, Theorem \ref{ample} \eqref{ample1} and Corollary \ref{prod_cor}.

As a consequence, we have the following result:

\begin{thm}\label{three}
Let $X$ be a Fano threefold.
\begin{enumerate}
\renewcommand{\theenumi}{\arabic{enumi}}
\renewcommand{\labelenumi}{$(\theenumi)$}
\item
$X$ is slope semistable along any effective divisor but 
there exists a divisor $D\subset X$ such that $X$ is not slope stable along $D$ 
if and only if $X$ is isomorphic to one of:
\[
{\pr}^3, {\pr}^1\times{\pr}^2, {\pr}_{{\pr}^1\times{\pr}^1}(\sO (0,1)\oplus\sO (1,0)), 
{\pr}^1\times{\pr}^1\times{\pr}^1, {\pr}^1\times S_m\,\, (1\leq m\leq 7).
\]
\item
There exists a divisor $D\subset X$ such that $X$ is not slope semistable along $D$ 
if and only if $X$ is isomorphic to one of:
\begin{eqnarray*}
{\Bl}_{\rm{line}}{\Q}^3, {\Bl}_{\rm{line}}{\pr}^3, {\pr}_{{\pr}^2}(\sO\oplus\sO (1)), 
{\pr}_{{\pr}^2}(\sO\oplus\sO (2)), \\
{\pr}^1\times{\F}_1, {\pr}_{{\F}_1}(\sO\oplus\sO (e+f)), 
{\pr}_{{\pr}^1\times{\pr}^1}(\sO\oplus\sO (1,1)). 
\end{eqnarray*}
\end{enumerate}
\end{thm}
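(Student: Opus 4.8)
The plan is to prove Theorem \ref{three} by assembling the case-by-case analysis of the preceding subsections, which is organized according to the Picard number $\rho_X$. Throughout, $X$ is a Fano threefold for which there exists a divisor $D\subset X$ with $\xi(D)\leq 0$; by Proposition \ref{divslope} this is exactly the hypothesis that $X$ is not slope stable along some divisor, so the theorem amounts to determining, among these $X$, which ones are slope semistable along every effective divisor (part (1)) and which admit a genuinely destabilizing divisor (part (2)). First I would recall that when $\rho_X=1$ (§6.1) the only possibility is $\pr^3$ with $D$ a hyperplane section, which is slope semistable by Proposition \ref{proj}. The $\rho_X=2$ case (§6.2) uses the Mori--Mukai structure theory \cite{MoMu83}: after splitting off the $\pr^2$-bundle subcase ($\pr^1\times\pr^2$, handled by Theorem \ref{ample}\eqref{ample1} and Corollary \ref{prod_cor}; and $\Bl_{\rm line}\pr^3$, destabilized via Propositions \ref{proj} and \ref{reduction}), Proposition \ref{ray} forces $l(R_1)=2$ and $(D\cdot l_1)=1$, so $D\sim aH_1+H_2$; the sign analysis of $a$ then isolates $\pr_{\pr^2}(\sO\oplus\sO(1))$, $\pr_{\pr^2}(\sO\oplus\sO(2))$ (destabilized by Proposition \ref{nonsemiprop}) and $\Bl_{\rm line}\Q^3$ (destabilized by the explicit $\xi(F)=-4<0$ computation), while every other configuration forces $\xi(D)>0$, contradicting the standing hypothesis.

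Next I would handle $\rho_X=3$ (§6.3). By Proposition \ref{ray} there is an extremal ray $R$ with $l(R)=2$ and $(D\cdot C_R)=1$, so $R$ is of type $E_2$ or $C_2$. The $E_2$ case is eliminated entirely: using the classification $X=\Bl_p(Y_d)$ with $1\leq d\leq 3$, one computes the effective cone explicitly, pins down $D\sim -E+H+F$, and finds $\frac13\xi(D)=\frac83>0$, a contradiction. The $C_2$ case gives a $\pr^1$-bundle $\pi\colon X\to Z$ with $Z\simeq\F_1$ or $\pr^1\times\pr^1$, and I would invoke the Szurek--Wiśniewski classification in Claim \ref{rho3}; then each of the seven resulting varieties (I)--(VII) is treated on its own terms: $\F_1\times_{\pr^2}\pr(T_{\pr^2})$, the $(1,1,1)$-divisor in $\pr^1\times\pr^1\times\pr^2$, and $\pr_{\pr^1\times\pr^1}(\sO(0,1)\oplus\sO(1,0))$ with $D\sim E_1+H_1$ all give $\frac13\xi(D)=\frac83>0$ (contradiction, so these occur only with $D\sim E_1$ or $E_2$ in the last case, where $\xi=0$); $\F_1\times\pr^1$, $\F_1\times_{\pr^2}\pr(\sO\oplus\sO(1))$, and $\pr_{\pr^1\times\pr^1}(\sO\oplus\sO(1,1))$ are destabilized; and $\pr^1\times\pr^1\times\pr^1$ and $\pr^1\times S_m$-type cases land in part (1) via Corollary \ref{prod_cor}. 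Finally, for $\rho_X\geq 4$ (§6.4), Proposition \ref{ray} plus \cite[p.\ 160]{MoMu} produce a type-$C_2$ ray whose contraction $\pi\colon X\to S$ has $S$ a del Pezzo surface of $\rho_S\geq 3$, and \cite[Theorem 4.20]{MoMu85} forces $X\simeq\pr^1\times S_m$ with $1\leq m\leq 7$, placed in part (1) by Proposition \ref{dp} and Corollary \ref{prod_cor}.

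Collecting the survivors: the varieties that are slope semistable along every effective divisor yet not slope stable along some divisor are precisely $\pr^3$, $\pr^1\times\pr^2$, $\pr_{\pr^1\times\pr^1}(\sO(0,1)\oplus\sO(1,0))$, $\pr^1\times\pr^1\times\pr^1$, and $\pr^1\times S_m$ for $1\leq m\leq 7$, giving part (1); the varieties admitting a destabilizing divisor are $\Bl_{\rm line}\Q^3$, $\Bl_{\rm line}\pr^3$, $\pr_{\pr^2}(\sO\oplus\sO(1))$, $\pr_{\pr^2}(\sO\oplus\sO(2))$, $\pr^1\times\F_1$, $\pr_{\F_1}(\sO\oplus\sO(e+f))$, and $\pr_{\pr^1\times\pr^1}(\sO\oplus\sO(1,1))$, giving part (2). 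Here I should note the identifications $\pr_{\pr^1}(\sO\oplus\sO\oplus\sO(1))\simeq\Bl_{\rm line}\pr^3$ and $\F_1\times_{\pr^2}\pr(\sO\oplus\sO(1))\simeq\pr_{\F_1}(\sO\oplus\sO(e+f))$, which keep the final list in the stated normal form. The main obstacle is not any single deep idea but the completeness of the bookkeeping: one must be certain that the structure-theory inputs (\cite{MoMu83}, \cite{SW}, \cite{MoMu85}, \cite{MoMu}) exhaust every Fano threefold admitting a length-$\geq 2$ extremal ray, that in every \textit{non}-listed case the constraints from Proposition \ref{ray} genuinely force $\xi(D)>0$, and that no listed variety was inadvertently placed in the wrong part — in particular that in the $\pr_{\pr^1\times\pr^1}(\sO(0,1)\oplus\sO(1,0))$ case the only admissible divisor classes with $\xi\leq 0$ are $E_1$ and $E_2$, for which $\xi=0$ rather than $<0$, so this variety belongs to part (1) and not part (2).
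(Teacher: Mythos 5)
Your proposal is correct and follows essentially the same route as the paper: the proof of Theorem \ref{three} is exactly the case-by-case analysis by Picard number in the preceding subsections, using Proposition \ref{ray} to constrain the extremal-ray structure, the Mori--Mukai and Szurek--Wi\'sniewski classifications to enumerate candidates, and explicit $\xi(D)$ computations (or Propositions \ref{convex}, \ref{reduction}, \ref{nonsemiprop}, \ref{dp} and Corollary \ref{prod_cor}) to sort each survivor into part (1) or part (2). Your bookkeeping of which varieties land where, including the borderline case $\pr_{\pr^1\times\pr^1}(\sO(0,1)\oplus\sO(1,0))$ with $\xi(E_i)=0$, matches the paper's.
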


\begin{remark}\label{tangent}
By Theorem \ref{three}, \cite[Theorem 1.1]{fjt} and the result of Steffens 
\cite[Theorem 3.1]{steffens}, 
There exists a Fano threefold $X$ which is slope stable along all divisors and smooth subvarieties 
but has the unstable tangent bundle. For example, $X$ is the blowing up of 
$\pr_{\pr^2}(\sO\oplus\sO(1))$ along a line on the exceptional divisor ($\simeq\pr^2$) 
of the blowing up $\pr_{\pr^2}(\sO\oplus\sO(1))\rightarrow\pr^3$ 
(no.\ 29 in Table 3 in Mori and Mukai's list \cite{MoMu}). 
\end{remark}

\bigskip
\bigskip

\noindent K.\ Fujita

Research Institute for Mathematical Sciences (RIMS),
Kyoto University, Oiwake-cho, 

Kitashirakawa, Sakyo-ku, Kyoto 606-8502, Japan 

fujita@kurims.kyoto-u.ac.jp

\end{document}